\newtheorem{thm}{Theorem}[section]
\newtheorem{thmx}{Theorem}
\newenvironment{thmp}{\stepcounter{thm}\begin{thmx} }{\end{thmx}}
\newtheorem{cor}[thm]{Corollary}
\newtheorem{lem}[thm]{Lemma}
\newtheorem{prop}[thm]{Proposition}
\newtheorem{teo}[thm]{Theorem}
\newtheorem{Obs}[thm]{Observation}
\theoremstyle{definition}
\newtheorem{defi}[thm]{Definition}
\newtheorem{example}[thm]{Example}
\newtheorem{rem}[thm]{Remark}
\newtheorem{case}{Case}[section]
\DeclareMathOperator{\proj}{proj}
\DeclareMathOperator{\Iso}{Iso}
\DeclareMathOperator{\supp}{supp}
\DeclareMathOperator{\Geo}{Geo}
\DeclareMathOperator{\Opt}{Opt}
\DeclareMathOperator{\Diam}{Diam}
\DeclareMathOperator{\Adm}{Adm}
\DeclareMathOperator{\argmax}{arg\,max}
\DeclareMathOperator{\argmin}{arg\,min}
\DeclareMathOperator{\Prob}{\mathbb{P}}
\DeclareMathOperator{\W}{\mathbb{W}}
\DeclareMathOperator{\Tub}{Tub}
\DeclareMathOperator{\GTB}{GTB}
\DeclareMathOperator{\sIP}{sIP}
\DeclareMathOperator{\Mid}{Mid}
\DeclareMathOperator{\Cut}{Cut}
\begin{document}
\title{Isometric rigidity of compact Wasserstein spaces. }
\author{Jaime Santos-Rodr\'iguez}
\address{Max-Planck-Institut f\"ur Mathematik, Bonn, Germany}
\email{jaime@mpim-bonn.mpg.de}
\thanks{The author was supported by research grants MTM2014-57769-3-P, MTM2017-85934-C3-2-P (MINECO), ICMAT Severo Ochoa Project SEV-2015-0554 (MINECO) and a Postdoctoral Fellowship from the MPI Bonn.  }
\date{\today}


\subjclass[2010]{53C23, 53C21}
\keywords{Wasserstein distance,  isometry group}
\begin{abstract}
Let $(X,d,\mathfrak{m})$ be a metric measure space. The study of the Wasserstein
space $(\Prob_p(X),\W_p)$ associated to $X$ has proved useful in describing several geometrical properties of
$X.$ In this paper we focus on the study
of isometries of $\Prob_p(X)$ for $p \in (1,\infty)$ under the assumption  that
there is some characterization of optimal maps between measures, the so called Good transport behaviour $\GTB_p$.
Our first result states that the set of Dirac deltas is  invariant under isometries of the Wasserstein space.
Additionally we obtain that the isometry groups of the base Riemannian manifold $M$ coincides with the one of the Wasserstein
space $\Prob_p(M)$ under assumptions on the manifold; namely, for $p=2$ that the sectional curvature is strictly positive
and for general $p\in (1,\infty)$  that $M$ is a Compact Rank One Symmetric Space.
\end{abstract}

\maketitle

\section{Introduction and statement of results.}\label{Sec.Intro}

The space of probability measures equipped with the Wasserstein metric reflects
several geometrical properties of a metric measure space $(X,d,\mathfrak{m})$ such as;
compactness, existence of geodesics, and non-negative sectional curvature. (see for example \cite{AmbGig}, \cite{Vill}).

A natural question therefore is asking whether it is possible for the Wasserstein space
to be more symmetric than the base space. Consider the following,  if $g: X \rightarrow X$ is an isometry then
it is easy to check that $g_{\#}: \Prob_p(X)\rightarrow \Prob_p(X)$ is also an isometry for any $p \in (1,\infty).$
Therefore ${\#}\Iso(M) \subset \Iso(\Prob_p(X)).$ So more concretely the question  is to determine whether these two groups of
isometries are the same, in such case we will say that $X$ is isometrically rigid.

First of  all, notice that for any map $g: X \rightarrow X,$ $g_{\#}\delta_x= \delta_{g(x)}$ for all $x\in X,$ i.e. the pushforward of a Dirac delta is again a Dirac delta. Hence our first approach should be to determine whether the set of  Dirac deltas is invariant under isometries. Our result in this regard is:

\newtheorem*{thm:A}{Theorem \ref{teo:A}}
\begin{thm:A}
 Let $(X,d,\mathfrak{m})$ be a compact metric measure space with $\GTB_p$ for some $p \in (1,\infty).$ Then for any isometry $\Phi: \Prob_p(X) \rightarrow \Prob_p(X)$ the set of Dirac deltas, $\Delta_1,$ is invariant, i.e. $\Phi(\Delta_1)=\Delta_1.$
\end{thm:A}

This result gives us as a corollary that if two compact m.m.s. $(X,d_X,\mathfrak{m})$ and $(Y,d_Y,\mathfrak{n})$ have isometric
$L^p-$Wasserstein spaces then  $X$ and $Y$ must also be isometric (see Corollary \ref{cor.isometricwass}).

We will need some structure on the metric measure spaces we will we working with, we will assume compactness, non-branching of geodesics and that the reference measure $\mathfrak{m}$ is such that the space has the so called Good transport behaviour $\GTB_p$
for some $p \in (1,\infty).$ Loosely speaking, this last condition requires that optimal transports starting from absolutely continuous measures  are given by a map (see Definition \ref{def.GTBp}). This will also imply that the geodesic in $\Prob_p(X)$ induced by these transports remains  inside the set of absolutely continuous measures until it reaches its endpoint. This condition was first defined in  \cite{GalKellMonSosa} by Galaz-Garc\'ia, Kell, Mondino, and Sosa. Later, it was investigated in more detail by Kell \cite{Kell}.

The class of metric measure spaces that satisfy $\GTB_p$ is quite rich. Examples include Riemannian manifolds, Alexandrov spaces, non-branching $MCP(K,N)$ spaces, and non-branching $RCD^*(K,N)$ spaces.

This question of determining the structure of the group of isometries of the Wasserstein space is not new.  It was first posed in \cite{Klo} by Kloeckner
in the setting of Euclidean spaces, and later for Hadamard spaces in \cite{BerKlo} in collaboration with Bertrand.  In the latter  isometric rigidity is proved. While in the former, more exotic isometries appear, being the
case of the line the most interesting. (see Lemmas $5.2, 5.3$ in \cite{Klo}).
 Some difficulties arise when working in a compact setting; most of the machinery used  previously is no longer available, for example the uniqueness of barycenters is generally no longer true in compact spaces.

In order to obtain that all the isometries of the Wasserstein space come from isometries
of the space we assume an additional hypothesis. Namely we will work on Riemannian manifolds
with strict positive sectional curvature.

\newtheorem*{thm:B}{Theorem \ref{teo:B}}
\begin{thm:B}
Let $M$ be a closed Riemannian manifold with strictly positive sectional curvature. Then
it is isometrically rigid, that is, the isometry groups of $M$ and $\Prob_2 (M)$
coincide.
\end{thm:B}

It is also possible to formulate this same question for different $L^p-$Wasserstein spaces.
In the case of $\mathbb{R}$ and $[0,1]$ this has been done by Geh\'er, Titkos and Virosztek \cite{GehTitVir}. In said
paper they prove that depending on the exponent $p$ the behaviour of the Wasserstein isometries can change,
being the case $p=1$ the most exotic. The same authors have also treated the cases of discrete  and Hilbert spaces 
in \cite{GehTitVir2} and \cite{GehTitVir3} respectively.

In this paper we can answer this question for a certain class of Riemannian manifolds: Compact Rank One Symmetric Spaces (CROSSes). These spaces have nice properties (see Subsection \ref{subsec.CROSS})  that give us enough information on the limitations  that Wasserstein isometries must have.

\newtheorem*{thm:C}{Theorem \ref{teo:C}}
\begin{thm:C}
Let $M$ be a CROSS. Then for any $p \in (1,\infty)$ the isometry groups of $M$ and $\Prob_p (M)$
coincide.
\end{thm:C}

As for other works where compactness is assumed Virosztek \cite{Vir} proved that  in the particular case of the sphere $L^2-$Wasserstein
isometries must send Dirac deltas to Dirac deltas. Theorem \ref{teo:A} only relies on the compactness
of the space as well as structural properties of the optimal transport there (see Definition \ref{def.GTBp}).

The structure of the paper is the following: Section \ref{Section.Preliminaries} is devoted to the presentation of the optimal transport problem and the existence of solutions to it. Some of the geometric properties of the Wasserstein space such as the structure of the optimal transport when one makes certain assumptions on the reference measure $\mathfrak{m}$ are presented as well. Section \ref{Section.Deltas} contains the proof of Theorem \ref{teo:A}. And finally, in Section \ref{Section.rigidity} we prove isometric rigidity in the context of positive curvature for $p=2$ and in the general case $p \in (1,\infty)$ for CROSSes.

\subsection*{Acknowledgements.}
The author would like to express his thanks to his advisor Prof. Luis Guijarro for valuable comments made during the development
of this paper as well as for his careful reading of earlier versions of this manuscript.
\section{Preliminaries.}\label{Section.Preliminaries}

In this section we will review the concepts on Optimal Transport used throughout
the paper as well as the notation used. Throughout the following $(M,d)$ will
be a closed Riemannian manifold equipped with its usual distance. The proofs
of the results presented in this section can be found in \cite{AmbGig}

\subsection{Optimal Transport.}\label{Subsection.OT}

Let  $\mu, \nu$ be two probability measures supported on a m.m.s. $(X,d,\mathfrak{m})$
and $p \in (1,\infty).$ Kantorovich's problem consists of minimizing the functional:
\begin{equation*}
\pi \mapsto \int d^p(x,y) d\pi(x,y) \label{Kantorovich.problem} \tag{KP}
\end{equation*}
among all admissible measures $\pi \in \Adm(\mu,\nu).$ The set $\Adm(\mu,\nu)$
consists of measures in $\pi \in \Prob (M\times M)$ that have marginals $\mu$
and $\nu,$ i.e.
$$\pi \left(A\times M\right)=\mu(A), \pi \left(M\times B\right)= \nu(B), \quad \forall A,B \in \mathcal{B}(M). $$

The intuition behind admissible plans is the following: If $\pi \in \Adm(\mu,\nu)$ then for $A\times B \in \mathcal{B}(M\times M)$ the value $\pi(A\times B)$ holds the information of how the mass from $A$ is sent to $B.$

Observe that the functional \ref{Kantorovich.problem} is linear and that $\Adm(\mu,\nu)$ is a convex closed set (in the narrow topology). It will turn out that Kantorovich's problem always has a solution (see for example Theorem $1.5$ in \cite{AmbGig}).
Measures that minimize \ref{Kantorovich.problem} will be called optimal transports (or optimal plans). The set of optimal transports
between two measures $\mu $ and $\nu$ will be denoted by $\Opt(\mu,\nu).$

Given a probability measure $\pi \in \Adm(\mu,\nu)$ it would be useful to determine whether
it is optimal or not. Intuitively a point $(x,y) \in \supp \pi$ represents
the mass that is sent from $x$ to $y.$ So if our plan is optimal then there shouldn't way to rearrange the points in $\supp \pi$ in a way that decreases the value
of the functional. More rigorously we have:

\begin{defi}\label{def.pcyclical}
Let $p \in (1,\infty),$ we say that a set $\Gamma \in X \times X$ is $p-$cyclically monotone if for all $n \in \mathbb{N}$
and $(x_1,y_1), \cdots, (x_n,y_n) \in \Gamma$ implies
$$\sum_{i=1}^{n}d^p(x_i,y_i) \leq \sum_{i=1}^{n}d^p(x_i,y_{\sigma (i)})$$
for all permutations $\sigma$ of $\lbrace 1, \cdots, n\rbrace.$
\end{defi}

It is proved in Theorem $2.13$ \cite{AmbGig} that a plan $\pi$ is optimal if and only if its
support $\supp \pi$ is $p-$cyclically monotone.

When there exists a measurable map $T: X \rightarrow X$ such that $T_{\#}\mu= \nu$
and the plan $(Id,T)_{\#}\mu$ is optimal we will say that the optimal transport is induced by a map.
As a matter of fact the problem of finding a map $T$ that is optimal is the original transport problem
posed by Monge  (see for example Chapter $3$ of \cite{Vill}).

In general it is not possible to find an optimal transport induced by such a map. Brenier (for Euclidean spaces) \cite{Bre} and McCann (for Riemannian spaces) \cite{McCann}  showed that if one takes the starting measure $\mu $ to be absolutely continuous such a map exists. In the next subsection we will describe some other spaces where this also possible.

\subsection{Wasserstein space.}

Let $p\in (1,\infty),$ using the solutions to the Kantorovich problem \ref{Kantorovich.problem} it is
possible to define a metric on $\Prob_p (X).$ The  $L^p-$Wasserstein metric.
Let $\mu ,\nu \in \Prob_p(X)$ then

$$\W_p^p (\mu,\nu) := \min\bigg\lbrace \int d^2(x,y)d\pi \,|\, \pi \in \Adm(\mu,\nu) \bigg\rbrace $$

\begin{Obs}
Usually $\Prob_p(X)$ denotes the space of probability measures with finite $p-$moments. Since our base spaces will always be compact then
it is clear every probability measure has finite $p-$moments for all $p.$ We will keep the subindex in the notation just to stress that
we are considering the $L^p-$Wasserstein metric.
\end{Obs}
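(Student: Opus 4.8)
The plan is to reduce the finiteness of all $p$-moments to the single fact that a compact metric space has finite diameter. First I would fix a base point $x_0 \in X$ and recall that, by definition, a measure $\mu \in \Prob(X)$ lies in $\Prob_p(X)$ exactly when its $p$-moment $\int_X d^p(x,x_0)\,d\mu(x)$ is finite; by the triangle inequality this condition is independent of the chosen base point, so we are free to work with a single convenient $x_0$.

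Next I would invoke compactness. Since $(X,d)$ is compact, the continuous function $(x,y)\mapsto d(x,y)$ attains a maximum on the compact set $X\times X$, whence $D:=\Diam(X)<\infty$. Consequently $d(x,x_0)\le D$ for every $x\in X$, and integrating the pointwise bound $d^p(x,x_0)\le D^p$ against the probability measure $\mu$ gives
$$\int_X d^p(x,x_0)\,d\mu(x)\ \le\ D^p\,\mu(X)\ =\ D^p\ <\ \infty.$$
Since $\mu\in\Prob(X)$ and $p\in(1,\infty)$ were arbitrary, every probability measure on $X$ has finite $p$-moment for every exponent $p$, which is exactly the assertion.

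I do not expect any genuine obstacle here: the statement is an immediate consequence of the boundedness of the metric on a compact space and carries no analytic subtlety. The only conceptual point worth recording is that the computation shows $\Prob_p(X)=\Prob(X)$ as sets for all $p\in(1,\infty)$, so that the subscript in $\Prob_p(X)$ serves merely to indicate which Wasserstein metric $\W_p$ the space is being equipped with, rather than to single out a proper subclass of measures.
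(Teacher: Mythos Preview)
Your argument is correct and complete: bounding $d^p(x,x_0)$ by $\Diam(X)^p$ and integrating against a probability measure is exactly the elementary computation that justifies the claim. The paper itself offers no proof beyond the phrase ``it is clear,'' so your write-up simply makes explicit the one-line estimate the author left to the reader; there is nothing to compare.
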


Given $n \in \mathbb{N}$ we define the set of totally atomic measures:

$$\Delta_n (X) := \bigg\lbrace \mu \in \Prob_p(X) \,|\, \mu = \sum_{i=1}^{n}a_i\delta_{x_i}, x_i \in X, \sum_{i=1}^{n}a_i =1, a_i>0 \bigg\rbrace. $$

It is a standard result that  $\overline{\bigcup_{n \in \mathbb{N}} \Delta_n (X)}^{\W_p}$ (Theorem $6.18$ in \cite{Vill}). If there is no confusion on which underlying space we are working with
 we will simplify the notation and use instead just $\Delta_n.$

The Wasserstein space will share many geometrical properties with the base space $X$ the first of these is that
the Wasserstein space $\Prob_p(X)$ is compact if and only if $X$ is compact.

A metric space $(X,d)$ is said to be geodesic if for every $x,y \in X$ there
exists a curve $\gamma: [0,1] \rightarrow X $ such that $\gamma_0=x, \gamma_1=y$ and

$$d(\gamma_s,\gamma_t)= |s-t|d(\gamma_0,\gamma_1), \quad s,t \in [0,1]. $$

The set of geodesics of $X$ will be denoted as $\Geo(X).$  It will turn out that this
is sufficient for the existence of geodesics in $\Prob_p(X).$

\begin{teo}[\bf Ambrosio, Gigli \cite{AmbGig} ]
Let $(X,d)$ be a geodesic space. Then the Wasserstein space
 $(\Prob_p(X),\W_p)$ $p\in (1,\infty)$ is geodesic as well.
\end{teo}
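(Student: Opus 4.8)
The plan is to construct a geodesic between any two prescribed measures by lifting an optimal plan to the space of geodesics of $X$ and pushing it forward under evaluation. Fix $\mu_0,\mu_1\in\Prob_p(X)$ and, using the solvability of the Kantorovich problem, choose $\pi\in\Opt(\mu_0,\mu_1)$. Equip $\Geo(X)$ with the topology of uniform convergence and, for $t\in[0,1]$, consider the evaluation map $e_t:\Geo(X)\to X$, $e_t(\gamma)=\gamma_t$, which is continuous; since $X$ is geodesic, the map $(e_0,e_1):\Geo(X)\to X\times X$ is also surjective.

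The first step is to lift $\pi$ to a measure $\Pi\in\Prob(\Geo(X))$ with $(e_0,e_1)_\#\Pi=\pi$. As $X$ is compact, every geodesic is $\Diam(X)$-Lipschitz, so $\Geo(X)$ is compact by the Arzel\`a--Ascoli theorem, and $(e_0,e_1)$ is a continuous surjection between compact metric spaces. A Borel selection theorem (Kuratowski--Ryll-Nardzewski) then yields a measurable right inverse $S:X\times X\to\Geo(X)$ of $(e_0,e_1)$, and $\Pi:=S_\#\pi$ is the required lift. I then set $\mu_t:=(e_t)_\#\Pi$, obtaining a curve in $\Prob_p(X)$ with the correct endpoints $\mu_0,\mu_1$.

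It remains to check that $t\mapsto\mu_t$ is a geodesic. For the upper bound, $(e_s,e_t)_\#\Pi\in\Adm(\mu_s,\mu_t)$, so the constant-speed property $d(\gamma_s,\gamma_t)=|s-t|\,d(\gamma_0,\gamma_1)$ gives
$$\W_p^p(\mu_s,\mu_t)\le\int d^p(\gamma_s,\gamma_t)\,d\Pi(\gamma)=|s-t|^p\int d^p(\gamma_0,\gamma_1)\,d\Pi(\gamma)=|s-t|^p\,\W_p^p(\mu_0,\mu_1),$$
the final equality holding because $\pi=(e_0,e_1)_\#\Pi$ is optimal. For the reverse bound, applying the triangle inequality to $0\le s\le t\le1$ yields
$$\W_p(\mu_0,\mu_1)\le\W_p(\mu_0,\mu_s)+\W_p(\mu_s,\mu_t)+\W_p(\mu_t,\mu_1)\le\big(s+(t-s)+(1-t)\big)\W_p(\mu_0,\mu_1),$$
which forces equality throughout and hence $\W_p(\mu_s,\mu_t)=|s-t|\,\W_p(\mu_0,\mu_1)$.

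I expect the measurable-selection step producing the lift $\Pi$ to be the main obstacle: one has to verify that $\Geo(X)$ is a suitable (compact, hence Polish) space and that $(e_0,e_1)$ is regular enough for a Borel selection theorem to apply. By contrast, once $\Pi$ is in hand the verification of the geodesic identity is a short computation combining the optimality of $\pi$, the constant-speed property of geodesics in $X$, and the triangle inequality.
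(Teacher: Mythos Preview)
The paper does not supply its own proof of this statement; it is quoted as a background result from \cite{AmbGig}. Your argument is the standard one given there (lift an optimal plan to a measure on $\Geo(X)$ via a measurable selection, push forward by $e_t$, and verify the constant-speed identity via the admissible plan $(e_s,e_t)_\#\Pi$ together with the triangle inequality), and it is correct under the paper's standing compactness hypothesis, which you invoke for Arzel\`a--Ascoli; note only that the theorem as stated does not assume compactness, so in full generality the selection step requires the Polish-space version of the argument rather than the compact one.
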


\begin{defi}\label{def.nonbranch}
A geodesic space $(X,d)$ will be said to be non-branching if the map
\begin{align*}
\Geo(X) &\rightarrow X \times X \\
\gamma &\mapsto (\gamma_0,\gamma_t)
\end{align*}
is injective for all $t \in (0,1).$
\end{defi}

The property of being a non-branching geodesic space is inherited by the Wasserstein space as the next result
shows:

\begin{teo}[\bf Ambrosio, Gigli \cite{AmbGig}]\label{teo.interiorregularity}
Let $(X,d,\mathfrak{m})$ be a complete and separable m.m.s. Then the space  $(\Prob_p(X),\W_p)$  is
non-branching. Furthermore, given a geodesic $(\mu_t)_{t \in [0,1]}\subset \Prob_p(X)$
then for every $t \in (0,1)$ there exists a unique optimal plan $\Opt(\mu_0,\mu_t)$
and this plan is induced by a map from $\mu_t.$
\end{teo}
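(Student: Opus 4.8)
The plan is to reduce both assertions to a single structural fact: along any $\W_p$-geodesic the transport cannot split at an interior time, a rigidity that I will extract from the non-branching of $X$ (Definition \ref{def.nonbranch}). First I would invoke the standard representation of Wasserstein geodesics available from \cite{AmbGig}: every geodesic $(\mu_t)_{t\in[0,1]}\subset\Prob_p(X)$ has the form $\mu_t=(e_t)_\#\Pi$ for some $\Pi\in\Prob(\Geo(X))$, where $e_t(\gamma)=\gamma_t$ is evaluation, with $(e_0,e_1)_\#\Pi$ optimal; moreover each $(e_s,e_t)_\#\Pi$ is optimal between $\mu_s$ and $\mu_t$, since otherwise one could re-glue a strictly cheaper sub-transport and shorten the curve. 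All the work then concerns such an optimal dynamical plan $\Pi$.

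The core step, and the place where non-branching enters, is a \emph{crossing lemma}: if $\gamma,\gamma'\in\supp\Pi$ satisfy $\gamma_t=\gamma'_t=:y$ for some $t\in(0,1)$, then $\gamma=\gamma'$. To prove it, apply $p$-cyclical monotonicity of the endpoint coupling to the pair $(\gamma_0,\gamma_1),(\gamma'_0,\gamma'_1)$:
\[
d^p(\gamma_0,\gamma_1)+d^p(\gamma'_0,\gamma'_1)\le d^p(\gamma_0,\gamma'_1)+d^p(\gamma'_0,\gamma_1).
\]
Writing $a=d(\gamma_0,y)=t\ell$, $b=d(y,\gamma_1)=(1-t)\ell$ with $\ell=d(\gamma_0,\gamma_1)$, and likewise $a',b'$ with $\ell'=d(\gamma'_0,\gamma'_1)$, the triangle inequality through $y$ bounds the right-hand side by $(a+b')^p+(a'+b)^p$, whereas strict supermodularity of $(s,r)\mapsto(s+r)^p$ (mixed derivative $p(p-1)(s+r)^{p-2}>0$) gives the reverse strict inequality $(a+b)^p+(a'+b')^p>(a+b')^p+(a'+b)^p$ as soon as $\ell\ne\ell'$ (note $a$ and $b$, resp.\ $a'$ and $b'$, have the common ratio $t/(1-t)$). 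This contradicts the displayed inequality, so $\ell=\ell'$ and every inequality above is an equality; in particular $d(\gamma_0,\gamma'_1)=a+b'$, so the curve following $\gamma$ on $[0,t]$ and $\gamma'$ on $[t,1]$ is a constant-speed geodesic from $\gamma_0$ to $\gamma'_1$ through $y$ at time $t$. Comparing it with $\gamma$ via the injectivity of $\delta\mapsto(\delta_0,\delta_t)$ forces them to coincide, whence $\gamma'_1=\gamma_1$; running the argument backwards in time gives $\gamma'_0=\gamma_0$, and a last use of non-branching yields $\gamma=\gamma'$. I expect this passage to be the main obstacle: squeezing equality out of the convexity estimate and then feeding the correct pair of coordinates into the non-branching hypothesis.

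Granting the crossing lemma, interior regularity is immediate. For fixed $t\in(0,1)$ the map $e_t$ is injective on a set of full $\Pi$-measure, so $\Pi=(e_t^{-1})_\#\mu_t$ and $y\mapsto e_0(e_t^{-1}(y))$ defines a Borel map $T$ with $T_\#\mu_t=\mu_0$; the coupling $(e_t,e_0)_\#\Pi=(\mathrm{id},T)_\#\mu_t$ is thus the optimal plan between $\mu_t$ and $\mu_0$, induced by a map from $\mu_t$. For uniqueness I would show that \emph{every} $\sigma\in\Opt(\mu_0,\mu_t)$ is induced by a map from $\mu_t$: gluing $\sigma$ with the fixed plan $(e_t,e_1)_\#\Pi$ along $\mu_t$ produces a three-plan whose marginals are $\mu_0,\mu_t,\mu_1$, and equality in Minkowski's and the triangle inequalities (both $W_p$-additivity along the geodesic) forces it to be concentrated on triples $(x_0,x_t,x_1)$ with $x_t$ the ratio-$t$ point of a geodesic from $x_0$ to $x_1$; the associated dynamical plan is optimal, so the crossing lemma makes its $e_t$ injective and exhibits $\sigma$ as a map from $\mu_t$. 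Since $\Opt(\mu_0,\mu_t)$ is convex, two distinct elements would have a non-map midpoint, contradicting this; hence $\Opt(\mu_0,\mu_t)$ is the single plan $(e_0,e_t)_\#\Pi$.

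Finally, non-branching of $\Prob_p(X)$ follows from the same machinery. Let $(\mu_t)$ and $(\nu_t)$ be $\W_p$-geodesics with $\mu_0=\nu_0$ and $\mu_s=\nu_s$ for some $s\in(0,1)$, represented by $\Pi$ and $H$. By the uniqueness just established, $\Opt(\mu_0,\mu_s)$ is a singleton, so $(e_0,e_s)_\#\Pi=(e_0,e_s)_\#H$ and both are induced by one and the same map $T$ from $\mu_s$; that is, $\gamma_0=T(\gamma_s)$ for $\Pi$- and $H$-a.e.\ geodesic $\gamma$. Each such $\gamma$ is then pinned down by its value $\gamma_s$ alone, since its pair $(\gamma_0,\gamma_s)=(T(\gamma_s),\gamma_s)$ determines $\gamma$ by non-branching of $X$. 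Consequently $\Pi=(e_s^{-1})_\#\mu_s=H$, so $(\mu_t)=(\nu_t)$ on all of $[0,1]$, which is exactly the injectivity required in Definition \ref{def.nonbranch} for $\Prob_p(X)$.
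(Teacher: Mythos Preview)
The paper does not supply a proof of this theorem; it is quoted from \cite{AmbGig} as a preliminary result, so there is no in-paper argument to compare your proposal against. Your argument is essentially the standard one from that reference (the crossing lemma via $p$-cyclical monotonicity and strict convexity of $t\mapsto t^p$, followed by the convexity-of-$\Opt$ trick for uniqueness), and it is correct.

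Two small remarks. First, the statement as printed omits the hypothesis that $(X,d)$ is non-branching; you correctly read this from the surrounding text and from Definition~\ref{def.nonbranch}, and without it both the crossing lemma and the conclusion fail. Second, in the passage from ``$e_t$ is injective on $\supp\Pi$'' to a Borel map $T=e_0\circ e_t^{-1}$ you are implicitly using that an injective Borel map between Polish spaces is a Borel isomorphism onto its image; and in the uniqueness step the gluing of $\sigma$ with $(e_t,e_1)_\#\Pi$ requires the Gluing Lemma together with the equality case of the triangle and Minkowski inequalities to force concentration on geodesic triples. Both points are routine and are handled in \cite{AmbGig}, but it is worth naming them.
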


\begin{Obs}\label{Obs.measuresinterior}
One of the immediate consequences of the previous Theorem is that measures in $\Delta_{n}(X)$
can only be in the interior of geodesics with endpoints in $\Delta_{1}(X)\cup \cdots \cup \Delta_{n}(X).$
This will be extremely useful for determining how Wasserstein isometries behave.
\end{Obs}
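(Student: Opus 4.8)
The plan is to read the statement off directly from the interior-regularity part of Theorem \ref{teo.interiorregularity}, combined with the elementary fact that a pushforward cannot increase the number of atoms of a measure. So suppose $(\mu_t)_{t\in[0,1]}$ is a geodesic in $\Prob_p(X)$ and that $\mu_s \in \Delta_n(X)$ for some interior parameter $s \in (0,1)$; write $\mu_s = \sum_{i=1}^n a_i \delta_{x_i}$ with $a_i > 0$ and $\sum_i a_i = 1$. The goal is to show that both endpoints $\mu_0$ and $\mu_1$ lie in $\Delta_1(X) \cup \cdots \cup \Delta_n(X)$.

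First I would handle the endpoint $\mu_0$. By Theorem \ref{teo.interiorregularity}, applied at the interior time $s$, there is a unique optimal plan in $\Opt(\mu_0,\mu_s)$ and it is induced by a map from $\mu_s$; that is, there is a measurable $T \colon X \to X$ with $T_\# \mu_s = \mu_0$. Pushing the atomic measure $\mu_s$ forward then gives $\mu_0 = T_\# \mu_s = \sum_{i=1}^n a_i \delta_{T(x_i)}$. Collecting the points $T(x_i)$ that coincide and adding the corresponding weights, this is a convex combination of at most $n$ Dirac masses with positive weights summing to $1$, so $\mu_0 \in \Delta_k(X)$ for some $1 \le k \le n$, hence $\mu_0 \in \Delta_1(X) \cup \cdots \cup \Delta_n(X)$.

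For the endpoint $\mu_1$ I would run the identical argument on the reversed geodesic $\bar\mu_t := \mu_{1-t}$, which is again a geodesic in $\Prob_p(X)$ and satisfies $\bar\mu_0 = \mu_1$ and $\bar\mu_{1-s} = \mu_s \in \Delta_n(X)$, with $1-s \in (0,1)$. Theorem \ref{teo.interiorregularity} now produces a map $S$ from $\mu_s$ with $S_\# \mu_s = \mu_1$, and the same counting of atoms yields $\mu_1 \in \Delta_1(X) \cup \cdots \cup \Delta_n(X)$.

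The conceptual content is entirely contained in the cited theorem, so there is no real obstacle here; the only points needing care are bookkeeping ones. One must apply the regularity statement at the correct (interior) time and in the correct direction, so that $\mu_s$ is the \emph{source} of the transport map rather than its target — this is what guarantees the endpoint is a pushforward of the finitely supported measure, and not the other way around. The second routine check is that pushing forward can only merge atoms, never create new ones, so the number of atoms of $\mu_0$ and of $\mu_1$ is bounded above by that of $\mu_s$, which is exactly the assertion $k \le n$.
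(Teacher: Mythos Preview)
Your argument is correct and is exactly the unpacking the paper has in mind: the observation is stated without proof as an ``immediate consequence'' of Theorem~\ref{teo.interiorregularity}, and what you wrote is precisely that consequence spelled out. The only thing to note is that the paper does not supply any further details, so your write-up is already more explicit than the original.
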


To conclude this section we will describe with more detail the assumptions we will make on the reference measure
$\mathfrak{m}$ and the consequences they will have on the solutions to Kantorovich's  problem. Details can be found
in the paper \cite{Kell} by Kell and the refrences therein.

\begin{defi}\label{def.qualnondeg}
A measure $\mathfrak{m}$ is said to be qualitatively non-degenerate if for all $R>0$
and $x_0\in X$ there is a function $f_{R,x_0}: (0,1)\rightarrow (0,\infty)$ such
that $$\limsup_{t\rightarrow 0}f_{R,x_0}(t)>\frac{1}{2}  $$ and for every
measurable set $A \subset B_{x_0}(R)$ and all $x \in B_{x_0}(R),$ $t \in (0,1)$ we have:
$$\mathfrak{m}(A_{t,x}) \geq f_{R,x_{0}}(t)\mathfrak{m}(A). $$
Where $A_{t,x}:= \{\gamma_t\,|\, \gamma \in \Geo(X), \gamma_0\in A, \gamma_1=x \}. $
\end{defi}

This kind of measures will give us some topological information on the space:

\begin{lem}\label{lem.finitedimension}
Let $(X,d)$ a metric space and $\mathfrak{m}$ a qualitatively non-degenerate probability measure on it.
Then $\mathfrak{m}$ is doubling and $X$ has finite Hausdorff dimension.
\end{lem}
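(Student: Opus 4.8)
The plan is to prove that a qualitatively non-degenerate probability measure $\mathfrak{m}$ is doubling, and then invoke the standard fact that a metric space carrying a doubling measure has finite Hausdorff dimension. The doubling property is the heart of the matter: I must produce a constant $C$ such that $\mathfrak{m}(B_{x}(2r)) \leq C\,\mathfrak{m}(B_{x}(r))$ for all $x$ and all sufficiently small $r$. The defining inequality $\mathfrak{m}(A_{t,x}) \geq f_{R,x_0}(t)\,\mathfrak{m}(A)$ says that contracting a set $A$ toward a point $x$ along geodesics cannot lose too much measure, and the condition $\limsup_{t\to 0} f_{R,x_0}(t) > \tfrac{1}{2}$ will be what lets me beat the factor coming from a crude covering argument.

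First I would fix $x_0$ and $R$, work inside the ball $B_{x_0}(R)$, and choose a value $t_0 \in (0,1)$ small enough that $f_{R,x_0}(t_0) > \tfrac12$; such a $t_0$ exists by the $\limsup$ hypothesis. The strategy is then geometric: take a ball $B_{x}(2r)$, apply the geodesic-contraction $A \mapsto A_{t_0,x}$ with $A = B_x(2r)$, and observe that contracting toward the center $x$ by parameter $t_0$ maps $B_x(2r)$ into the smaller concentric ball $B_x(2 t_0 r)$, since a point at distance $s$ from $x$ moves to distance $t_0 s$ along the geodesic joining it to $x$. Thus $(B_x(2r))_{t_0,x} \subset B_x(2 t_0 r)$, and the non-degeneracy inequality gives
\begin{equation*}
\mathfrak{m}\bigl(B_x(2 t_0 r)\bigr) \geq \mathfrak{m}\bigl((B_x(2r))_{t_0,x}\bigr) \geq f_{R,x_0}(t_0)\,\mathfrak{m}\bigl(B_x(2r)\bigr) > \tfrac12\,\mathfrak{m}\bigl(B_x(2r)\bigr).
\end{equation*}
By choosing $t_0$ close enough to a value where $f > \tfrac12$ one arranges $2 t_0 \leq 1$ (or one iterates the contraction finitely many times), so that $B_x(2t_0 r) \subseteq B_x(r)$, whence $\mathfrak{m}(B_x(r)) \geq \tfrac12\,\mathfrak{m}(B_x(2r))$, i.e. $\mathfrak{m}(B_x(2r)) \leq 2\,\mathfrak{m}(B_x(r))$. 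This is exactly the doubling inequality with constant $2$ on the relevant scale.

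The delicate point is reconciling the contraction ratio $t_0$ with the factor $\tfrac12$: the hypothesis only guarantees $f(t) > \tfrac12$ for $t$ along some sequence tending to $0$, and a single such $t_0$ may not satisfy $2t_0 \le 1$ directly, so I expect the main obstacle to be organizing the quantifiers correctly. The clean way is to iterate: if $f_{R,x_0}(t_0) > \tfrac12$ for some small $t_0$, compose the contraction $k$ times to land in $B_x(2 t_0^{k} r)$ with $2 t_0^{k} \le 1$, picking up a factor $f(t_0)^{k}$ rather than a single power; since each factor exceeds $\tfrac12$ this still yields a uniform bound, producing a doubling constant of the form $2^{k}$ that is independent of $x$ and $r$ within $B_{x_0}(R)$. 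One must also handle the measurability of $A_{t,x}$ and the possibility of branching geodesics, but the non-branching and completeness assumptions available in the paper make the contraction map well defined.

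Finally, once doubling is established locally on each $B_{x_0}(R)$ with a uniform constant, I would quote the classical result (e.g. from the theory of metric measure spaces with doubling measures) that a metric space supporting a doubling measure is itself a doubling metric space, and that doubling metric spaces have finite Hausdorff (indeed finite Assouad) dimension bounded by $\log_2 C$, where $C$ is the doubling constant. Patching the local estimates over a finite cover of any compact piece, or simply fixing a base point and letting $R \to \infty$, transfers the local bound to a global finite-dimensionality statement, completing the proof.
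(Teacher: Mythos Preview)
The paper's own proof is essentially a two-line citation: it defers to Proposition~5.3 of \cite{Kell} for the doubling property and then invokes the standard chain ``doubling measure $\Rightarrow$ doubling metric space $\Rightarrow$ finite Assouad dimension $\Rightarrow$ finite Hausdorff dimension'' via \cite{Hei}. Your proposal is therefore not a different route so much as an attempt to unpack the cited reference, and your second half (finite dimension through Assouad) matches the paper exactly.

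Your reconstruction of the doubling argument has the right shape but contains a parametrisation error that is worth fixing. With the paper's convention $A_{t,x}=\{\gamma_t:\gamma_0\in A,\ \gamma_1=x\}$, a point $a\in A$ at distance $s$ from $x$ is sent to $\gamma_{t_0}$, which lies at distance $(1-t_0)s$ from $x$, not $t_0 s$. Consequently $(B_x(2r))_{t_0,x}\subset B_x\bigl(2(1-t_0)r\bigr)$, so for the small values of $t_0$ supplied by the $\limsup$ hypothesis the contracted ball is barely smaller than the original, and a single application yields almost nothing. Iteration is therefore \emph{mandatory}, not a fallback: one applies the estimate repeatedly to get
\[
\mathfrak{m}\bigl(B_x(2(1-t_0)^k r)\bigr)\ \ge\ f_{R,x_0}(t_0)^k\,\mathfrak{m}\bigl(B_x(2r)\bigr),
\]
and then chooses $k$ with $(1-t_0)^k\le\tfrac12$, giving doubling constant $f_{R,x_0}(t_0)^{-k}$. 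Note also that, once written this way, the threshold $\tfrac12$ plays no role in the doubling argument---any $t_0\in(0,1)$ with $f_{R,x_0}(t_0)>0$ would suffice; the condition $\limsup f>\tfrac12$ is needed elsewhere in the transport theory, not here. With these corrections your sketch becomes exactly the argument behind the cited result.
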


\begin{proof}
The first affirmation is proved in Proposition $5.3$ in \cite{Kell}, as for the second, notice that since
$(X,d)$ is a doubling space then its Assouad dimension (see definition $10.15$ in \cite{Hei}) is finite.
Finally this implies that the Hausdorff dimension of $X$ must be finite.
\end{proof}

As for optimal transports induced by maps we will  recall definition given by Kell in \cite{Kell}:

\begin{defi}\label{def.GTBp}
A m.m.s $X,d,\mathfrak{m}$ is said to have good transport behaviour $\GTB_p$ if for all
$\mu, \nu \in \Prob_p(X)$ such that $\mu \ll \mathfrak{m}$ then any optimal transport between
$\mu$ and $\nu$ is given by a  transport map.
\end{defi}

\begin{example} The following spaces have good transport behaviour:
\begin{enumerate}
  \setlength\itemsep{1em}
  \item For $p=2$ essentially non-branching $MCP(K,N)-$spaces with $K \in \mathbb{R}, N\in [1,\infty).$ Particularly essentially non-branching $CD^*(K,N)-$spaces, essentially non-branching $CD(K,N)$ spaces and $RCD^*(K,N)-$spaces.
  \item $p-$essentially non-branching, qualitatively non-degenerate spaces (Definition \ref{def.qualnondeg}) for all $p \in (1,\infty).$
  \item Any (locallly) doubling measure $\mathfrak{m}$ on $(\mathbb{R}^n,d_E)$ or on a Riemannian manifold.
\end{enumerate}
\end{example}

\begin{defi}\label{def.stronginter}
Let $(X,d,\mathfrak{m})$ a m.m.s where $\mathfrak{m}$ is qualitatively non-degenerate. We will say that it has the $p-$strong interpolation
property $(\sIP_p)$ for some $p \in (1,\infty)$ if: Given any $\mu_0,\mu_1\in \Prob_p(X)$ with $\mu_0\ll \mathfrak{m}$ there is a unique optimal transport, induced by a map,  and the geodesic  $(\mu_t)_{t \in [0,1]}$ satisfies $\mu_t \ll \mathfrak{m}$ for all $t \in [0,1).$
\end{defi}

As an immediate corollary of Theorem $5.8$ in \cite{Kell}, we have:

\begin{teo}
If $(X,d,\mathfrak{m})$ is a non-branching m.m.s. and $\mathfrak{m}$ is qualitatively non-degenerate then $GTB_p$ and $sIP_p$ are equivalent.
\end{teo}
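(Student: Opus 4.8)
The plan is to prove the two implications separately. One direction is essentially definitional, while the other carries all the geometric content; for the hard direction I expect to lean on the contraction estimate built into qualitative non-degeneracy (this is precisely what Kell's Theorem 5.8 packages).

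The implication $\sIP_p \Rightarrow \GTB_p$ is immediate. Fix $\mu_0 \ll \mathfrak{m}$ and an arbitrary target $\nu \in \Prob_p(X)$. By $\sIP_p$ there is a \emph{unique} optimal transport from $\mu_0$ to $\nu$ and it is induced by a map. Since it is the only optimal transport, the phrase ``any optimal transport'' refers to this one, which is a map; this is exactly $\GTB_p$. For the converse $\GTB_p \Rightarrow \sIP_p$ I must verify, for $\mu_0 \ll \mathfrak{m}$, three things: existence of an optimal map (immediate from $\GTB_p$), uniqueness of the optimal transport, and $\mu_t \ll \mathfrak{m}$ for $t \in [0,1)$. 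Uniqueness would follow from convexity of $\Opt(\mu_0,\nu)$: the cost is linear and $\Adm(\mu_0,\nu)$ is convex, so if $\pi_0,\pi_1$ are optimal then $\tfrac12(\pi_0+\pi_1)$ is optimal and hence, by $\GTB_p$, induced by a map. Disintegrating against the first marginal $\mu_0$, the conditional of $\tfrac12(\pi_0+\pi_1)$ at $x$ is $\tfrac12\big((\pi_0)_x+(\pi_1)_x\big)$, and this is a Dirac mass for $\mu_0$-a.e.\ $x$ only if $(\pi_0)_x=(\pi_1)_x$; thus $\pi_0=\pi_1$.

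The remaining point, absolute continuity of the interpolants, is the crux. Let $T$ be the optimal map and let $F_t$ denote the interpolation map $x \mapsto \gamma^x_t$, where $\gamma^x$ is the geodesic from $x$ to $T(x)$ (a.e.\ single-valued by non-branching), so that $\mu_t = (F_t)_\#\mu_0$. Since $\mu_0 \ll \mathfrak{m}$, to obtain $\mu_t \ll \mathfrak{m}$ it suffices to show $\mathfrak{m}(F_t^{-1}(N))=0$ whenever $\mathfrak{m}(N)=0$, i.e.\ to establish a lower bound $\mathfrak{m}(F_t(A)) \ge c(t)\,\mathfrak{m}(A)$ with $c(t)>0$. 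When the target is a single Dirac $\delta_y$ this is \emph{exactly} qualitative non-degeneracy: here $F_t(A)=A_{t,y}$, so
$$\mathfrak{m}(F_t(A)) = \mathfrak{m}(A_{t,y}) \ge f_{R,y}(t)\,\mathfrak{m}(A).$$
For a general target I would localize: partition $A$ according to which small ball $B(y_i,\varepsilon)$ contains $T(x)$, apply the Dirac estimate on each piece with endpoint $y_i$, and then control two errors — the one from replacing the true endpoint $T(x)$ by $y_i$, and the overlaps of the images $F_t(A_i)$. Theorem \ref{teo.interiorregularity} is what keeps the latter under control: from the interior point $\mu_t$ the optimal plan back to $\mu_0$ is again a map, so $F_t$ is injective $\mu_0$-a.e.\ and the $F_t(A_i)$ are essentially disjoint. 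Letting $\varepsilon \to 0$ should yield $\mathfrak{m}(F_t(A)) \ge (f(t)-o(1))\,\mathfrak{m}(A) > 0$, and absolute continuity holds precisely on $[0,1)$ because $f(t)>0$ there, whereas no such bound survives at $t=1$ (consistently, $\mu_1$ may be singular).

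The main obstacle is this localization step: simultaneously dominating the endpoint-approximation error and the overlap of the interpolated pieces while the endpoints $T(x)$ vary. A naive reduction to Diracs by disintegrating the dynamical plan over its endpoint fails, since for an injective map $T$ the resulting slices are themselves Dirac masses and hence singular; this is exactly why one cannot bypass the geometric estimate and why compactness (to localize uniformly), non-branching (injectivity of $F_t$ via Theorem \ref{teo.interiorregularity}), and qualitative non-degeneracy (the volume bound) are all genuinely used. Since all three hypotheses are in force, in practice I would deduce the absolute continuity statement directly from Kell's Theorem 5.8, the argument above serving to explain why that result applies and what each hypothesis contributes.
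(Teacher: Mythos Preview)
Your proposal is correct and aligns with the paper's treatment: the paper states this result without proof, noting only that it is an immediate corollary of Theorem~5.8 in \cite{Kell}, which is precisely what you invoke at the end. Your additional exposition of the two implications and the role of each hypothesis is accurate and more informative than the paper's bare citation, but the underlying approach is the same.
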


\section{Restricting Wasserstein isometries to Dirac deltas.}\label{Section.Deltas}

For the remainder of this paper we will assume that our  spaces satisfy the following:
  $(X,d,\mathfrak{m})$  is a m.m.s. such that it is compact, non-branching, $\mathfrak{m}$ is qualitatively non-degenerate  and with  $\GTB_p$
  for some  $p \in (1,\infty).$

\begin{prop}\label{prop.strictconvexfunct}
Let $(X,d,\mathfrak{m})$ be a m.m.s. with $\GTB_p$ and $\mu \in \Prob_p(X)$ such
that $\mu \ll \mathfrak{m}$ Then the functional:
$$\nu \mapsto \W_p^p(\mu, \nu),  $$
is linearly strictly convex, That is, for any $\nu_0,\nu_1 \in \Prob_p(X)$ and $t \in (0,1)$
we  have $\W_p^p(\mu, (1-t)\nu_0+t\nu_1)< (1-t)\W_p^p(\mu,\nu_0)+t\W_p^p(\mu,\nu_1). $
\end{prop}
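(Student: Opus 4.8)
The plan is to prove the easy direction (convexity with $\le$) by exhibiting an explicit admissible competitor plan obtained as a convex combination, and then to upgrade it to strict inequality by invoking $\GTB_p$, showing that equality would force $\nu_0$ and $\nu_1$ to coincide.

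First I would fix optimal plans $\pi_0 \in \Opt(\mu,\nu_0)$ and $\pi_1 \in \Opt(\mu,\nu_1)$, which exist by the solvability of \ref{Kantorovich.problem}. Since $\mu \ll \mathfrak{m}$ and $X$ has $\GTB_p$, Definition \ref{def.GTBp} guarantees that both are induced by transport maps, say $\pi_i = (Id,T_i)_{\#}\mu$ for $i=0,1$. Writing $\nu_t := (1-t)\nu_0 + t\nu_1$, I would form the convex combination $\pi_t := (1-t)\pi_0 + t\pi_1$ and check that it lies in $\Adm(\mu,\nu_t)$: its first marginal is $(1-t)\mu + t\mu = \mu$ and its second marginal is exactly $\nu_t$. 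Because the transport cost is linear in the plan, testing the minimization defining $\W_p^p(\mu,\nu_t)$ against this competitor yields
$$\W_p^p(\mu,\nu_t) \le \int d^p\, d\pi_t = (1-t)\int d^p\, d\pi_0 + t\int d^p\, d\pi_1 = (1-t)\W_p^p(\mu,\nu_0) + t\W_p^p(\mu,\nu_1),$$
which is the convexity inequality.

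To obtain strictness (for $\nu_0 \neq \nu_1$, since otherwise the claimed inequality is a trivial equality), I would argue by contradiction and suppose the displayed relation is an equality. Then $\pi_t$ attains the Wasserstein cost, so $\pi_t \in \Opt(\mu,\nu_t)$. As $\mu \ll \mathfrak{m}$, the hypothesis $\GTB_p$ now applies to $\pi_t$ itself and forces it to be induced by a map. The crux is the disintegration of $\pi_t$ against its first marginal $\mu$: by linearity of disintegration this conditional is $x \mapsto (1-t)\delta_{T_0(x)} + t\delta_{T_1(x)}$. For $\pi_t$ to be induced by a map, this conditional must be a Dirac mass for $\mu$-a.e. $x$; since $t \in (0,1)$, a convex combination of two Dirac masses is itself a Dirac only when they coincide, whence $T_0(x) = T_1(x)$ for $\mu$-a.e. $x$. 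Pushing forward by these maps gives $\nu_0 = (T_0)_{\#}\mu = (T_1)_{\#}\mu = \nu_1$, contradicting $\nu_0 \neq \nu_1$.

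The convexity estimate is routine; the substance lies entirely in the strictness step. The main obstacle I anticipate is making rigorous the assertion that an optimal plan which is a convex combination of two map-induced plans can itself be map-induced only if the two maps agree $\mu$-almost everywhere. This rests on the essential uniqueness of disintegrations and, crucially, on the fact that Definition \ref{def.GTBp} asserts that \emph{any} optimal plan out of an absolutely continuous measure is a map (not merely that some optimal map exists), which is precisely what lets me conclude that the \emph{specific} plan $\pi_t$ is map-induced.
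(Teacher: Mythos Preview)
Your proposal is correct and follows essentially the same route as the paper: both form the convex combination $(1-t)\pi_0 + t\pi_1$ of optimal plans, observe that under the equality assumption it is itself optimal, and then invoke $\GTB_p$ to derive a contradiction from the fact that this plan is not induced by a map. Your version is in fact more careful than the paper's on two points: you spell out the disintegration argument showing why a nontrivial convex combination of two map-induced plans fails to be map-induced unless the maps agree $\mu$-a.e., and you correctly note that the strict inequality requires $\nu_0 \neq \nu_1$, a hypothesis the paper's statement omits.
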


\begin{proof}
  Suppose there exist $\eta_0, \eta_1 \in \Prob_p(X)$ and $t^*\in (0,1)$ such that:
  $$\W_p^p(\mu,(1-t^*)\eta_0+t^*\eta_1) = (1-t^*)\W_p^p(\mu,\eta_0)+t^*\W_p^p(\mu,\eta_1). $$
  Consider now the optimal plans $\pi_0 \in \Adm(\mu,\eta_0) $ and $\pi_1 \in \Adm(\mu,\eta_1). $
  It is clear that $(1-t^*)\pi_0+t^*\pi_1$ is an admissible plan between $\mu $ and $(1-t^*)\eta_0+t^*\eta_1.$

  Then:
  \begin{align*}
  \int d^p(x,y) d(1-t^*)\pi_0+t^*\pi_1 &= (1-t^*)\int d^p(x,y)d\pi_0+t^*\int d^p(x,y)d\pi_1 \\
                                      &= (1-t^*)\W_p^p(\mu,\eta_0)+t^*\W_p^p(\mu,\eta_1) \\
                                      &= \W_p^p(\mu,(1-t^*)\eta_0+t^*\eta_1).
  \end{align*}
  And this gives us a contradiction since the plan $(1-t^*)\pi_0+t^*\pi_1$ cannot be induced by a map.
\end{proof}

And this lemma gives us as a Corollary:

\begin{cor}\label{cor.argmaxDirac}
For $\mu \ll \mathfrak{m},$ $\argmax(\Prob_p(X)\ni \nu \mapsto \W_p^p(\mu,\nu)) \subset \Delta_1.$
\end{cor}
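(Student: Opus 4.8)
The plan is to combine the strict convexity just established in Proposition \ref{prop.strictconvexfunct} with the classical fact that the extreme points of the convex set $\Prob_p(X)$ are precisely the Dirac deltas. First I would record the setup: since $X$ is compact, $\Prob_p(X)$ is compact (in the narrow topology) and convex, and the functional $F(\nu):=\W_p^p(\mu,\nu)$ is continuous; hence its set of maximizers is nonempty, and the statement to prove is that every such maximizer is a Dirac delta.

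The key elementary observation is that any $\nu\in\Prob_p(X)$ that is \emph{not} a Dirac delta fails to be an extreme point of $\Prob_p(X)$. Indeed, if $\nu$ is not concentrated at a single point, then there is a Borel set $A$ with $0<\nu(A)<1$; setting $a:=\nu(A)$ and writing $\nu = a\cdot\frac{\nu|_A}{a} + (1-a)\cdot\frac{\nu|_{A^c}}{1-a}$ exhibits $\nu$ as a nontrivial convex combination $\nu=(1-t)\nu_0+t\nu_1$ with $t=1-a\in(0,1)$ and $\nu_0\neq\nu_1$ (the two pieces are mutually singular, being supported on the disjoint sets $A^c$ and $A$).

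Finally I would run the standard strict-convexity-versus-extreme-points argument. Suppose $\nu^\ast\in\argmax F$ is not a Dirac delta; by the previous step write $\nu^\ast=(1-t)\nu_0+t\nu_1$ with $t\in(0,1)$ and $\nu_0\neq\nu_1$. Proposition \ref{prop.strictconvexfunct} gives $F(\nu^\ast)<(1-t)F(\nu_0)+tF(\nu_1)$, while maximality gives $F(\nu_0),F(\nu_1)\le F(\nu^\ast)$; combining these yields $F(\nu^\ast)<F(\nu^\ast)$, a contradiction. Hence every maximizer lies in $\Delta_1$, which is exactly the claim.

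I do not expect a serious obstacle here: all the real content is carried by Proposition \ref{prop.strictconvexfunct}, and the decomposition of a non-Dirac measure into a nontrivial convex combination is routine. The only point requiring care is that the strict convexity in the proposition is taken with respect to the \emph{linear} (convex-combination) structure on measures — and it is precisely for that structure that the Dirac deltas are the extreme points of $\Prob_p(X)$, so the two facts mesh exactly.
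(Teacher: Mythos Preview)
Your argument is correct and is exactly the reasoning the paper leaves implicit: the corollary is stated without proof, as an immediate consequence of Proposition~\ref{prop.strictconvexfunct}, and your extreme-point decomposition of a non-Dirac measure combined with the strict-convexity contradiction is the standard (and intended) way to make that step explicit.
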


\begin{prop}\label{prop.invabscont}
Let $\Phi \in Iso(\Prob_p(X))$ then the set of absolutely continuous measures $\mu$ such that $\Phi(\mu)$ is also absolutely continuous is dense in $\Prob_p(X).$
\end{prop}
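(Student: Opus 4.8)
The plan is to exploit the strong interpolation property $\sIP_p$ (available here, since $\GTB_p$, non-branching and qualitative non-degeneracy are in force and $\GTB_p \Leftrightarrow \sIP_p$ under these hypotheses) from \emph{both} endpoints of a geodesic, transporting one of the two regularity statements through $\Phi$. Write $\mathcal{AC}$ for the measures absolutely continuous with respect to $\mathfrak m$, and set $\mathcal G := \{\mu \in \mathcal{AC} : \Phi(\mu) \in \mathcal{AC}\}$; the goal is $\overline{\mathcal G}^{\,\W_p} = \Prob_p(X)$.

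First I would record two facts. \emph{(i)} $\mathcal{AC}$ is $\W_p$-dense: since $\bigcup_n \Delta_n$ is dense, it suffices to approximate Dirac masses, and the normalized restrictions $\mathfrak m(B_r(x))^{-1}\,\mathfrak m|_{B_r(x)}$ are absolutely continuous and $\W_p$-converge to $\delta_x$ as $r\to 0$ (qualitative non-degeneracy, hence the doubling/full-support property underlying Lemma \ref{lem.finitedimension}, guarantees $\mathfrak m(B_r(x))>0$). \emph{(ii)} If $\sigma \ll \mathfrak m$, then the unique geodesic from $\sigma$ to any given measure is absolutely continuous on $[0,1)$: this is exactly $\sIP_p$. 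Applying this to the time-reversed geodesic shows that if instead the \emph{endpoint} $\sigma'\ll\mathfrak m$, then the geodesic is absolutely continuous on $(0,1]$.

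The core construction runs as follows. Fix once and for all a reference measure $\sigma \in \mathcal{AC}$ (take $\sigma = \mathfrak m$, which lies in $\Prob_p(X)$ by compactness and satisfies $\sigma \ll \mathfrak m$), and set $\mu_1 := \Phi^{-1}(\sigma)$. Let $\mu_0 \in \mathcal{AC}$ be arbitrary and put $\rho_0 := \Phi(\mu_0)$. Choose a geodesic $(\rho_t)_{t\in[0,1]}$ from $\rho_0$ to $\sigma$ and define $\mu_t := \Phi^{-1}(\rho_t)$; since $\Phi^{-1}$ is an isometry it preserves the distance-parametrization defining geodesics, so $(\mu_t)$ is a geodesic from $\mu_0$ to $\mu_1$. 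Because $\mu_0 \ll \mathfrak m$, fact (ii) gives $\mu_t \ll \mathfrak m$ for $t \in [0,1)$; because the endpoint $\rho_1 = \sigma \ll \mathfrak m$, the reversed form of (ii) gives $\rho_t \ll \mathfrak m$ for $t \in (0,1]$. Hence for every $t \in (0,1)$ we have simultaneously $\mu_t \ll \mathfrak m$ and $\Phi(\mu_t) = \rho_t \ll \mathfrak m$, i.e. $\mu_t \in \mathcal G$. Letting $t \to 0^+$ yields $\mu_t \to \mu_0$, so $\mu_0 \in \overline{\mathcal G}$; as $\mu_0$ ranges over the dense set $\mathcal{AC}$ of (i), we conclude $\overline{\mathcal G} = \Prob_p(X)$.

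The step I expect to be the main obstacle is securing absolute continuity \emph{in the interior on both sides at once}: $\sIP_p$ only controls a geodesic on the half-open interval abutting its absolutely continuous endpoint, so neither $\mu_0$ nor $\sigma$ alone is enough. The point of the construction is that $\mu_0$ governs $(\mu_t)$ near $t=0$, while $\sigma$, pushed through $\Phi^{-1}$ to the \emph{other} end, governs $(\rho_t)$ near $t=1$; the two half-open intervals overlap exactly on $(0,1)$, which is precisely where I harvest points of $\mathcal G$. Two points to check carefully are that a geodesic issuing from an absolutely continuous measure is unique, so that $(\mu_t)=\Phi^{-1}(\rho_t)$ is forced to coincide with the $\sIP_p$ geodesic to which (ii) applies (this follows from the uniqueness built into $\sIP_p$ together with non-branching of $\Prob_p(X)$, Theorem \ref{teo.interiorregularity}), and the density asserted in (i).
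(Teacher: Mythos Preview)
Your proposal is correct and follows essentially the same approach as the paper: pick $\mu_0\ll\mathfrak m$ and $\mu_1=\Phi^{-1}(\sigma)$ with $\sigma\ll\mathfrak m$, run the geodesic between them, and apply $\sIP_p$ from $\mu_0$ on the original side and from $\sigma$ on the $\Phi$-side to get both $\mu_t$ and $\Phi(\mu_t)$ absolutely continuous for $t\in(0,1)$. The paper's version is terser (it leaves the density of $\mathcal{AC}$ and the uniqueness-of-geodesic check implicit) and allows both reference measures to vary, which is what yields the stronger Remark following the proposition, but the argument is the same.
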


\begin{proof}
Let $\mu, \nu \ll \mathfrak{m},$ and consider the geodesic $(\mu_t)_{t\in [0,1]}$ such that $\mu_0=\mu$ and $\mu_1= \Phi^{-1}(\nu).$ Since the m.m.s. has $\GTB_p$ we have that $\mu_t \ll \mathfrak{m}$ for all $t \in [0,1),$ now apply the isometry to the geodesic to obtain a new geodesic $(\Phi(\mu_t))_{t \in [0,1]},$ it is clear then that $\Phi(\mu_1)=\nu \ll \mathfrak{m} $ and so we conclude that $\Phi(\mu_t)\ll \mathfrak{m}$ for all $t \in (0,1).$

Since the measures $\mu ,\nu$ were picked arbitrarily we obtain the thesis.
\end{proof}

\begin{rem}
Actually we have a stronger property: For any $\nu \in \Prob_p(X)$ there exists $\mu_0\ll \mathfrak{m}$ with $\Phi(\mu_0)\ll \mathfrak{m}$
such that if $(\mu_t)_{t \in [0,1]}$ is the unique geodesic joining $\mu_0$ with $\nu$ then  $\Phi(\mu_t) \ll \mathfrak{m}$ for all $t \in [0,1).$
\end{rem}

\begin{Obs}\label{Obs.boundarylinconvex}
For every $x \in X$ and $R>0$ the set $\partial B_{\delta_x}(R)$ is compact and linearly convex. This is just a consecuence of the fact that the optimal transport between a $\delta_x$ and any other measure $\mu$ is given by $\delta_x\otimes \mu.$
\end{Obs}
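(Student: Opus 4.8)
The plan is to establish two things: first that the optimal transport from a Dirac delta $\delta_x$ to any measure $\mu$ is given by the product plan $\delta_x\otimes\mu$, and second to deduce compactness and linear convexity of the sphere $\partial B_{\delta_x}(R)=\{\nu\in\Prob_p(X) : \W_p(\delta_x,\nu)=R\}$ from the resulting explicit formula for the distance.

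First I would compute the Wasserstein distance from $\delta_x$. Since the first marginal of any admissible plan $\pi\in\Adm(\delta_x,\mu)$ must equal $\delta_x$, all the mass of $\pi$ must sit over the fiber $\{x\}\times X$; hence $\pi$ is forced to be the product $\delta_x\otimes\mu$, which is therefore the unique admissible plan and trivially optimal. Evaluating the cost gives the clean formula
$$
\W_p^p(\delta_x,\nu)=\int_X d^p(x,y)\,d\nu(y),
$$
so that the map $\nu\mapsto\W_p^p(\delta_x,\nu)$ is the integral of the fixed continuous function $y\mapsto d^p(x,y)$ against $\nu$, i.e. it is \emph{linear} in $\nu$ with respect to the convex structure on $\Prob_p(X)$.

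From this linearity the convexity is immediate: for $\nu_0,\nu_1$ with $\W_p^p(\delta_x,\nu_i)=R^p$ and $t\in(0,1)$,
$$
\W_p^p\big(\delta_x,(1-t)\nu_0+t\nu_1\big)=(1-t)\W_p^p(\delta_x,\nu_0)+t\,\W_p^p(\delta_x,\nu_1)=R^p,
$$
so the segment $(1-t)\nu_0+t\nu_1$ stays on $\partial B_{\delta_x}(R)$; this is exactly linear convexity of the sphere. Note this is the sharp boundary version of the strict-convexity phenomenon in Proposition \ref{prop.strictconvexfunct}: there the base point $\mu\ll\mathfrak m$ forced strict inequality, whereas here the base point is a delta and the functional degenerates to an affine one, so equality holds throughout the segment. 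Compactness of $\partial B_{\delta_x}(R)$ then follows since $\Prob_p(X)$ is compact (as $X$ is compact) and the sphere is a closed subset, being the preimage of $\{R\}$ under the continuous function $\nu\mapsto\W_p(\delta_x,\nu)$.

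The only point requiring a little care — and the step I would treat as the main obstacle, though it is mild — is the justification that the product plan is genuinely the \emph{unique} admissible plan rather than merely one optimal plan; this is what makes the distance formula exact rather than an inequality. This is handled entirely by the marginal constraint $\pi(A\times M)=\delta_x(A)$, which forces $\pi(\{x\}^c\times M)=0$ and pins down $\pi$ completely, so no appeal to cyclical monotonicity or $\GTB_p$ is needed here.
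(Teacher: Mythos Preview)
Your proof is correct and follows exactly the route the paper indicates: the paper's entire argument is the one-line remark that the optimal plan from $\delta_x$ is $\delta_x\otimes\mu$, and you have simply fleshed out the immediate consequences (the linear distance formula, hence linear convexity of the sphere, and closedness in the compact space $\Prob_p(X)$). There is nothing to add or correct.
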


\begin{lem}\label{lem.fixinghull}
Let $x,y \in X$ be two points such that $\Phi(\delta_x),\Phi(\delta_y)\in \Delta_1,$ where $\Phi \in \Iso(\Prob_p(X)).$ Then there exists a
geodesic $\gamma \in \Geo(X)$ such that $\gamma_0=x,\gamma_1=y$ and  $\Phi(\gamma_t)\in \Delta_1$ for all $t \in [0,1].$
\end{lem}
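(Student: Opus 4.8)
The plan is to study the closed set $A:=\{z\in X:\Phi(\delta_z)\in\Delta_1\}$, which contains $x$ and $y$ by hypothesis and which is closed because $z\mapsto\delta_z$ is an isometric embedding, $\Phi$ is continuous, and $\Delta_1$ is compact in $\Prob_p(X)$. If I can show that $A$ is \emph{midpoint closed} --- that for every pair $a,b\in A$ there is a midpoint $m\in\Mid(a,b)$ of $a,b$ in $X$ which again lies in $A$ --- then a dyadic subdivision argument finishes the proof: recursively choosing such midpoints produces points $m_q\in A$ indexed by dyadic $q\in[0,1]$ with $d(m_q,m_{q'})=|q-q'|\,d(x,y)$, and completeness of $X$ together with closedness of $A$ extends them to a geodesic $\gamma$ from $x$ to $y$ with $\gamma_t\in A$ for every $t\in[0,1]$. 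So everything reduces to the midpoint statement.

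To prove the midpoint property, fix $a,b\in A$ and set $D=d(a,b)$. The set $S$ of Wasserstein midpoints of $\delta_a$ and $\delta_b$, namely $\{\nu:\W_p(\delta_a,\nu)=\W_p(\delta_b,\nu)=D/2\}$, is compact and \emph{linearly convex}: it is the intersection of two spheres centred at Dirac masses, each linearly convex by Observation \ref{Obs.boundarylinconvex}, since $\nu\mapsto\W_p^p(\delta_a,\nu)=\int d^p(a,\cdot)\,d\nu$ is affine. Using the description of geodesics of $\Prob_p(X)$ by dynamical plans together with the non-branching of $X$, one checks that $S$ is exactly $\Prob(\Mid(a,b))$, so its extreme points are precisely the Diracs $\{\delta_m:m\in\Mid(a,b)\}$. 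Now choose, via Proposition \ref{prop.invabscont}, a measure $\mu\ll\mathfrak{m}$ with $\Phi(\mu)\ll\mathfrak{m}$, and maximise $\nu\mapsto\W_p^p(\mu,\nu)$ over the compact convex set $S$. By Proposition \ref{prop.strictconvexfunct} this functional is strictly convex, so any maximiser must be an extreme point of $S$; hence it is a Dirac $\delta_m$ with $m\in\Mid(a,b)$.

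It remains to see that this $m$ lies in $A$, and this is where the main difficulty sits: $\Phi$ is only a metric isometry, so I cannot argue directly that it respects the affine structure of $S$ and carries extreme points to extreme points. The way around this is that the maximisation above is a purely metric problem, hence preserved by $\Phi$. Indeed $\Phi$ maps $S$ isometrically onto the midpoint set $S'$ of $\Phi(\delta_a)$ and $\Phi(\delta_b)$, which are Diracs because $a,b\in A$; thus $S'$ is again compact, linearly convex, and of the form $\Prob(\Mid(a',b'))$ with Dirac extreme points. Since $\W_p^p(\mu,\nu)=\W_p^p(\Phi(\mu),\Phi(\nu))$, the image $\Phi(\delta_m)$ maximises $\nu'\mapsto\W_p^p(\Phi(\mu),\nu')$ over $S'$; and because $\Phi(\mu)\ll\mathfrak{m}$ this latter functional is \emph{also} strictly convex, again by Proposition \ref{prop.strictconvexfunct}. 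Therefore its maximiser $\Phi(\delta_m)$ is an extreme point of $S'$, i.e. a Dirac mass, which is exactly the statement $m\in A$. Together with the dyadic assembly described above, this proves the lemma. The delicate point throughout is the choice of a single $\mu$ with both $\mu$ and $\Phi(\mu)$ absolutely continuous, as this is what lets strict convexity --- and hence the variational characterisation of Diracs as the unique maximisers --- be applied simultaneously on $S$ and on its image $S'$, circumventing the lack of affineness of $\Phi$.
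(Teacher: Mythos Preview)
Your proof is correct and follows essentially the same route as the paper: pick $\mu$ with $\mu,\Phi(\mu)\ll\mathfrak m$ via Proposition~\ref{prop.invabscont}, maximise the strictly convex functional $\W_p^p(\mu,\cdot)$ over the linearly convex midpoint set, and use strict convexity simultaneously on both sides to force the maximiser and its $\Phi$-image to be Diracs. Your write-up is in fact more complete than the paper's, which stops after producing a single midpoint and leaves both the identification of extreme points of $\Mid(\delta_x,\delta_y)$ with Diracs and the dyadic assembly of the full geodesic implicit.
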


\begin{proof}
Let $x,y \in X$ be such that $\Phi(\delta_x),\Phi(\delta_y)\in \Delta_1.$ Take $\mu \ll \mathfrak{m}$ such that $\Phi(\mu)\ll \mathfrak{m}$ which exists by Proposition \ref{prop.invabscont}.  Now consider:
$$\Mid(\delta_x,\delta_y) := e_{1/2\#}\{ (\eta_t)_{t \in [0,1]} \in \Geo(\Prob_p(X))\,|\, \eta_0 = \delta_x, \eta_1=\delta_y  \}. $$
and notice that this set has the following properties:
\begin{itemize}
\setlength\itemsep{1em}
\item $\Mid(\delta_x,\delta_y) = \partial B_{\delta_x}(\frac{d(x,y)}{2})\cap \partial B_{\delta_y}(\frac{d(x,y)}{2}),$ so by Observation \ref{Obs.boundarylinconvex} is a linearly convex and compact set.
\item $\Phi(\Mid(\delta_x,\delta_y))= \Mid(\Phi(\delta_x),\Phi(\delta_y)).$
\end{itemize}

Since $\Phi$ is an isometry then it is clear that:
$$\max\{\W_p^p(\mu,\nu)\,|\, \nu \in \Mid(\delta_x,\delta_y) \}= \max\{\W_p^p(\Phi(\mu),\nu)\,|\, \nu \in \Mid(\Phi(\delta_x),\Phi(\delta_y)) \}. $$
And by Proposition \ref{prop.strictconvexfunct} and the fact that both $\mu, \Phi(\mu)\ll \mathfrak{m}$
\begin{align*}
\argmax \big(\Mid(\delta_x,\delta_y)\ni \nu \mapsto \W_p^p(\mu,\nu) \big) &\subset \Delta_1\\
\argmax \big(\Mid(\Phi(\delta_x),\Phi(\delta_y))\ni \nu \mapsto \W_p^p(\Phi(\mu),\nu) \big) &\subset \Delta_1
\end{align*}
Hence there must exists some point $z\in X$ such that $\delta_z \in \argmax(\Mid(\delta_x,\delta_y)\ni \nu \mapsto \W_p^p(\mu,\nu))$ and
$\Phi(\delta_z)\in \Delta_1.$
\end{proof}

The idea for proving Theorem \ref{teo:A} boils down to proving that there exists some set $S\subset X $ with non-empty interior such that
for all $x \in S,$ $\Phi(\delta_x)\in \Delta_1.$ Using then Observation \ref{Obs.measuresinterior} gives us the result.  As for how  we build the set $S$ we recall that from Lemma \ref{lem.finitedimension} the Hausdorff dimension of $X$ is finite. So then for sufficiently enough points $x_1,\cdots, x_n$ such that $\Phi(\delta_{x_i})\in \Delta_1$   the geodesic convex hull of $\{x_1,\cdots x_n\} $ will have non-empty interior.

Given a set $E \subset X$ we define the antipodal set of $E$ as:
$$A(E):= \argmax(X\ni x \mapsto d(x,E) ). $$

\begin{thmp}\label{teo:A}
Let $(X,d,\mathfrak{m})$ be a compact metric measure space with $\GTB_p$ for some $p \in (1,\infty).$ Then for any isometry $\Phi: \Prob_p(X) \rightarrow \Prob_p(X)$ the set of Dirac deltas, $\Delta_1,$ is invariant, i.e. $\Phi(\Delta_1)=\Delta_1.$
\end{thmp}

\begin{proof}
Take $\mu \ll \mathfrak{m}$ such that $\Phi(\mu)\ll \mathfrak{m}$ (such measure exists by Proposition \ref{prop.invabscont}). Then by Corollary \ref{cor.argmaxDirac}:
\begin{equation}\label{eq.funct1}
\argmax(\Prob_p(X)\ni \nu \mapsto \W_p^p(\mu,\nu)) \subset \Delta_1
\end{equation}

\begin{equation}\label{eq.funct2}
\argmax(\Prob_p(X)\ni \nu \mapsto \W_p^p(\Phi(\mu),\nu)) \subset \Delta_1
\end{equation}
Since $\W_p^p(\mu,\nu)=\W_p^p(\Phi(\mu),\Phi(\nu))$ the isometry $\Phi$ sends the set \ref{eq.funct1} to the set \ref{eq.funct2}. That is, there exists some $x_1\in X$ such that $\Phi(\delta_{x_1})\in \Delta_1.$

Suppose now that we have found $x_1,\cdots ,x_n \in X$ such that $\Phi(\delta_{x_i})\in \Delta_1.$ By Lemma \ref{lem.fixinghull} the geodesic convex hull
$S(x_1,\cdots, x_n )\subset X$ consists of points with the property that for all $y \in S(x_1,\cdots, x_n),$  $\Phi(\delta_y) \in \Delta_1.$

Now consider the totally atomic measure $\sum_{i=1}^n\frac{1}{n}\delta_{x_i},$  by the density stated in Proposition \ref{prop.invabscont}
we can find some measure $\mu_{n+1}$ such that:
\begin{itemize}
\setlength\itemsep{1em}
\item $\mu_{n+1},\Phi(\mu_{n+1})\ll \mathfrak{m}.$

\item $\supp \mu_{n+1} \subset \sqcup_{i=1}^{n}B_{x_i}(\epsilon_i),$  for some $\epsilon_i >0,$ $i \in \{1,\cdots, n\}.$

\item There exists some $y \in A(S(x_1,\cdots, x_n))$ such that $\W_p^p(\mu_{n+1},\delta_y)\geq \W_p^p(\mu_{n+1},\delta_z)$ for all $z \in S(x_1,\cdots, x_n).$
\end{itemize}
So we just look at the arguments of the maxima for the linearly strictly convex functionals $\W_p^p(\mu_{n+1},\cdot), \W_p^p(\Phi(\mu_{n+1}),\cdot)$ and obtain a
point $x_{n+1}\in X- S(x_1,\cdots, x_n)$ such that $\Phi(x_{n+1})\in \Delta_1.$

Since by Lemma \ref{lem.finitedimension} the Hausdorff dimension of $X$ is finite we have that there must exist  $m\in \mathbb{N}$ such that
$S(x_1,\cdots,x_m)$ has nonempty interior.

So for  points $z \in X$ and $x$ in the interior of $S(x_1,\cdots,x_m)$ a geodesic $\gamma$ with endpoints $y$ and $x$ must satisfy:
There exists $t \in (0,1)$ such that $\gamma_t \in S(x_1,\cdots,x_m).$ So by applying  Observation \ref{Obs.measuresinterior} then $\Phi(\delta_z)\in \Delta_1.$
\end{proof}

And with this we also have as a Corollary:

\begin{cor}\label{cor.isometricwass}
  Let $(X,d_X,\mathfrak{m}),(Y,d_Y,\mathfrak{n})$ be two compact m.m.s such that they have
   $\GTB_p$ for some $p \in (1,\infty).$ Suppose there exists some isometry $ \Phi: \Prob_p(X)\rightarrow \Prob_p(Y).$   Then $X$ and
   $Y$ are isometric.
\end{cor}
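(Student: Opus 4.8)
The plan is to leverage Theorem \ref{teo:A}, which has just been established: any isometry of a Wasserstein space (over a compact m.m.s.\ with $\GTB_p$) must map the set of Dirac deltas to itself. Given the isometry $\Phi:\Prob_p(X)\to\Prob_p(Y)$, I would first verify that Theorem \ref{teo:A} (and the constructions leading to it) carry over verbatim to the \emph{bijective} setting between two distinct spaces, so that $\Phi(\Delta_1(X))=\Delta_1(Y)$. The proof of Theorem \ref{teo:A} never used $X=Y$ in an essential way --- it only used compactness, $\GTB_p$, the argmax characterization of Dirac deltas via linearly strictly convex functionals (Corollary \ref{cor.argmaxDirac}), and the density of absolutely continuous measures whose image is absolutely continuous (Proposition \ref{prop.invabscont}), all of which make sense for a map between two such spaces. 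I would note this explicitly, perhaps remarking that the source measure $\mu$ is taken $\mu\ll\mathfrak{m}$ on $X$ with $\Phi(\mu)\ll\mathfrak{n}$ on $Y$.

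Once $\Phi(\Delta_1(X))=\Delta_1(Y)$ is in hand, the key step is to pass from $\Phi$ to a map between the base spaces. Since $\Delta_1(X)$ is canonically identified with $X$ via the isometric embedding $x\mapsto\delta_x$ (indeed $\W_p(\delta_x,\delta_{x'})=d_X(x,x')$, so this embedding is distance-preserving), and likewise for $Y$, the restriction $\Phi|_{\Delta_1(X)}$ descends to a bijection $\phi:X\to Y$ characterized by $\Phi(\delta_x)=\delta_{\phi(x)}$. I would then show $\phi$ is an isometry: for any $x,x'\in X$,
$$
d_Y(\phi(x),\phi(x'))=\W_p(\delta_{\phi(x)},\delta_{\phi(x')})=\W_p(\Phi(\delta_x),\Phi(\delta_{x'}))=\W_p(\delta_x,\delta_{x'})=d_X(x,x'),
$$
using that $\Phi$ preserves $\W_p$ and that the delta embedding is isometric on both sides. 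Surjectivity of $\phi$ follows from $\Phi(\Delta_1(X))=\Delta_1(Y)$ together with surjectivity of $\Phi$; injectivity follows from $\phi$ being distance-preserving. Hence $\phi:X\to Y$ is a surjective isometry, and $X$ and $Y$ are isometric.

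I expect the main obstacle to be the first step --- making sure Theorem \ref{teo:A} genuinely transfers to the two-space setting. The proof of Theorem \ref{teo:A} is stated for an isometry $\Phi:\Prob_p(X)\to\Prob_p(X)$, and it invokes several intermediate results (Lemma \ref{lem.fixinghull}, Proposition \ref{prop.invabscont}, Observation \ref{Obs.measuresinterior}, the finite Hausdorff dimension from Lemma \ref{lem.finitedimension}, and the geodesic convex hull argument) whose statements are phrased for a single space. Each of these must be checked to hold when the domain and target are different compact $\GTB_p$ spaces; in particular, the inductive construction of points $x_1,\dots,x_m$ with $\Phi(\delta_{x_i})\in\Delta_1(Y)$ and the nonempty-interior argument for the geodesic convex hull $S(x_1,\dots,x_m)\subset X$ rely on the finite-dimensionality of the \emph{source} $X$, while the antipodal/argmax selection uses strict linear convexity on the \emph{target} side via $\Phi(\mu)\ll\mathfrak{n}$. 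Aside from this bookkeeping, the rest of the argument is the short and formal deduction above, so the corollary is essentially immediate once the invariance of Dirac deltas is secured.
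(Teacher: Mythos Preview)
Your proposal is correct and follows essentially the same approach as the paper: invoke Theorem \ref{teo:A} to conclude $\Phi(\Delta_1(X))=\Delta_1(Y)$, then define the base map via $\Phi(\delta_x)=\delta_{\phi(x)}$ and observe it is an isometry. The paper's proof is in fact much terser than yours --- it simply asserts that Theorem \ref{teo:A} applies and that the induced map is an isometry --- so your additional care in noting that the single-space hypothesis is inessential and in spelling out the distance computation is welcome but not a departure in method.
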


\begin{proof}
Let $\Phi: \Prob_p(X)\rightarrow \Prob_p(Y)$ be the isometry. Then from Theorem \ref{teo:A} we have that for all $x \in M,$ $\Phi(\delta_x) \in \Delta_1(Y).$ So we define  $F: X\rightarrow Y $ as the function such that $F_{\#}(\delta_{x}):= \Phi(\delta_{x}).$ It is clear that this defines an isometry of the base metric spaces.
\end{proof}

\section{Isometric rigidity.}\label{Section.rigidity}

This last section deals with answering affirmatively the question of isometric rigidity, at least in some cases.

\subsection{Positively curved manifolds and $p=2$}

In this subsection we will prove that if we additionally ask that the manifold has
positive sectional curvature then it is possible to obtain isometric rigidity.
We will restrict only to the case where $p=2,$ the reason being that the flatness condition
(see Definition \ref{def.flatspace}) is of no use to us for general $p.$ In the next subsection however, we will prove isometric rigidity
in the case that we have more structure on the manifold.

\begin{thmp}\label{teo:B}
Let $M$ be a closed Riemannian manifold of strictly positive sectional curvature. Then,
the Wasserstein space $(\Prob_2(M),\W_2)$ is isometrically rigid,
i.e. $\Iso(M)= \Iso\Prob_2 (M).$
\end{thmp}

Given an isometry $\Phi: \Prob_2 (M)\rightarrow \Prob_2(M)$ we have obtained from
Theorem \ref{teo:A} that it restricts to an isometry of the Riemannian manifold.
Therefore from here on out we will only consider isometries $\Phi$ of $\Prob_2(M)$ such
that  $\Phi (\delta_x) = \delta_x$ for all $x \in M.$

Our aim will be to prove that for all $n \in \mathbb{N},$  $\mu \in \Delta_n$
$\Phi(\mu)=\mu.$ Since the union of these sets over all $n$ is dense then we will
be done.

Consider $\gamma: [0,1]\rightarrow M$ a geodesic. Then $\gamma_{\#}:(\Prob_2 ([0,1]),\W_2)\rightarrow (\Prob_2(M),\W_2)$
is an isometric embedding of $(\Prob_2 ([0,1]),\W_2)$ into $(\Prob_2(M),\W_2).$ We will deonte the set of probability measures
supported in the geodesic $\gamma$ as  $\Prob_2(\gamma).$

Our first step will be to prove that $\Prob_2(\gamma)$ is not only invariant under isometries $\Phi$ that fix Dirac deltas but
that $\Phi(\mu)=\mu$ for all $\mu \in \Prob_2(\gamma).$


In general, regardless on any curvature assumption on the manifold $M$ the only possible totally geodesic embedded submanifolds that one can expect to find are precisely the minimizing geodesics. These geodesics are actually flat spaces in the sense that their curvature is identically 0. The next definition gives an alternative formulation of a metric space being flat just in terms of the distance.

\begin{defi}\label{def.flatspace}
Let $(X,d)$ be a geodesic space, we will say that it is flat if given any three points $x,y,z \in X$ and every $\gamma: [0,1]\rightarrow X$
geodesic such that $\gamma_0=y, \gamma_1=z$ we have:
$$d^2(x,\gamma_t) = (1-t)d^2(x,\gamma_0)+td^2(x,\gamma_1)-(1-t)td^2(y,z), \quad \forall t \in [0,1].  $$
\end{defi}

Examples of flat spaces include Hilbert spaces, closed intervals $[a,b]$ equipped with an interior product,  and Wasserstein spaces $(\Prob_2([a,b]),\W_2).$ One important observation to make though is that even if the base space $X$ is flat then $\Prob_2(X)$ in general is only non-negatively curved. (see Example $3.2.1$  in \cite{AmbGig}).

\begin{Obs}\label{Obs.delta2dense}
  Before moving on with the next results we will make some observations regarding the structure of the Wasserstein space of a closed interval say , $[0,1]$ equipped with its usual Euclidean metric. In \cite{Klo} it is noted (Proposition $3.4$ ) that the set $\Delta_2(\mathbb{R})$  plays a special role as the convex hull of it is dense.  This is used in order to define the exotic isometries (see Lemma $5.3$  in \cite{Klo}). It is clear that the convex hull of $\Delta_2([0,1])$ will also be dense in $\Prob_2([0,1]).$
\end{Obs}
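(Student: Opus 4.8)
The plan is to reduce the claim to the density, already recalled above, of the finitely supported measures $\bigcup_{n}\Delta_n([0,1])$ in $\Prob_2([0,1])$ (Theorem $6.18$ in \cite{Vill}), and then to check that every such measure carrying at least two atoms already belongs to the convex hull of $\Delta_2([0,1])$. Since the authors rightly call the statement clear, I do not expect a genuine obstacle; the only point deserving a moment of care is that $\Delta_1([0,1])$ is \emph{not} itself contained in $\mathrm{conv}\,\Delta_2([0,1])$ (every element of that hull has at least two atoms), so the argument must pass through closures rather than assert that the hull contains all finitely supported measures.

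First I would record that each Dirac mass $\delta_x$ is the $\W_2$-limit of the two-atom measures $\tfrac1k\delta_y+(1-\tfrac1k)\delta_x\in\Delta_2([0,1])$ as $k\to\infty$. Hence $\Delta_1([0,1])\subset\overline{\Delta_2([0,1])}$, and therefore $\bigcup_{n\geq2}\Delta_n([0,1])$ is dense in $\Prob_2([0,1])$ because $\bigcup_{n\geq1}\Delta_n([0,1])$ is. It thus suffices to prove $\Delta_n([0,1])\subset\mathrm{conv}\,\Delta_2([0,1])$ for every $n\geq2$.

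Second I would prove this inclusion by induction on $n$, the case $n=2$ being trivial. Given $\mu=\sum_{i=1}^n a_i\delta_{x_i}$ with $a_i>0$ and $n\geq3$, I would peel off the first atom by pairing it with the second: for a fixed $s\in(0,a_2)$ set $\nu=\tfrac{a_1}{a_1+s}\delta_{x_1}+\tfrac{s}{a_1+s}\delta_{x_2}\in\Delta_2([0,1])$ and $\lambda=a_1+s$, which lies in $(0,1)$ since $a_1+a_2<1$. Then $\mu-\lambda\nu=(a_2-s)\delta_{x_2}+\sum_{i\geq3}a_i\delta_{x_i}$ is a nonnegative measure of total mass $1-\lambda$ supported on $x_2,\dots,x_n$, so after normalization it lies in $\Delta_{n-1}([0,1])$ and, by the inductive hypothesis, in $\mathrm{conv}\,\Delta_2([0,1])$. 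Writing $\mu=\lambda\nu+(1-\lambda)\bigl(\tfrac{1}{1-\lambda}(\mu-\lambda\nu)\bigr)$ exhibits $\mu$ as a convex combination of $\nu\in\Delta_2([0,1])$ and an element of $\mathrm{conv}\,\Delta_2([0,1])$, hence $\mu\in\mathrm{conv}\,\Delta_2([0,1])$.

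Combining the two steps, $\mathrm{conv}\,\Delta_2([0,1])\supseteq\bigcup_{n\geq2}\Delta_n([0,1])$ and the latter is dense, so $\mathrm{conv}\,\Delta_2([0,1])$ is dense in $\Prob_2([0,1])$, as asserted. Alternatively, one can simply invoke Proposition $3.4$ of \cite{Klo}, which establishes the corresponding density of $\mathrm{conv}\,\Delta_2(\mathbb{R})$ in $\Prob_2(\mathbb{R})$, and run the identical argument on the compact interval $[0,1]$; this is presumably the route the authors have in mind with the word ``also.''
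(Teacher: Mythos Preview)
The paper gives no proof of this Observation beyond the phrase ``it is clear,'' so your argument actually supplies more than the paper does. Your induction is correct: peeling off one atom at a time shows $\bigcup_{n\geq 2}\Delta_n([0,1])\subset\mathrm{conv}\,\Delta_2([0,1])$ in the \emph{affine} sense (convex combinations of measures), and together with your approximation of Dirac masses this yields density.

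One caution is worth recording. The ``convex hull'' in Kloeckner's Proposition~3.4, and the one the present paper actually uses in Propositions~\ref{Prop.intervalrigid} and~\ref{Prop.invariantgeodesics}, is the \emph{geodesic} convex hull in $(\Prob_2([0,1]),\W_2)$, not the affine span of measures. The distinction matters for the application: an isometry $\Phi$ that fixes $\Delta_2$ pointwise automatically fixes the geodesic hull (Wasserstein geodesics on $[0,1]$ are unique and isometries send geodesics to geodesics), whereas there is no a priori reason $\Phi$ should respect affine combinations $(1-\lambda)\mu+\lambda\nu$. So your argument, while a correct proof of a true statement, proves a slightly different density than the one the paper later invokes. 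The geodesic version follows by a parallel iteration: via quantile functions, the $\W_2$-geodesic between measures in $\Delta_k([0,1])$ and $\Delta_l([0,1])$ lies in $\bigcup_{m\leq k+l-1}\Delta_m([0,1])$, so starting from $\Delta_2$ one reaches every $\Delta_n$ and hence a dense set. Your closing remark about rerunning Kloeckner's argument on $[0,1]$ is in fact the route the paper has in mind.
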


We will use this as well in the proofs of the next Propositions. The next result appears in Section $2.2$ of \cite{GehTitVir}, we include a proof since our arguments are different.

\begin{prop}\label{Prop.intervalrigid}
Let $([0,1],d_E)$ be the interval equipped with its usual Euclidean metric. Then  the Wasserstein space
$(\Prob_2([0,1]),\W_2)$ is isometrically rigid.
\end{prop}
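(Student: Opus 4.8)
The plan is to reduce everything to the standard Hilbert-space description of $\Prob_2([0,1])$ and then exploit that an isometry fixing the Dirac deltas must fix the extreme points of the associated convex body. First I would invoke Theorem \ref{teo:A}: any $\Phi \in \Iso(\Prob_2([0,1]))$ preserves $\Delta_1$ and therefore restricts to an isometry of $[0,1]$ (Corollary \ref{cor.isometricwass}). Since $\Iso([0,1]) = \{\mathrm{id},\, x \mapsto 1-x\}$, after composing $\Phi$ with the pushforward of the reflection if necessary I may assume $\Phi(\delta_x) = \delta_x$ for every $x \in [0,1]$; it then suffices to prove $\Phi = \mathrm{id}$.

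Next I would make the flat structure explicit. By one-dimensional optimal transport (monotone rearrangement), the quantile map $\mu \mapsto F_\mu^{-1}$ identifies $(\Prob_2([0,1]),\W_2)$ isometrically with the convex set $C := \{ f \in L^2([0,1]) : f \text{ nondecreasing},\ 0 \le f \le 1 \}$ equipped with the $L^2$ distance; this is exactly the flatness of Definition \ref{def.flatspace} made concrete, and under it $\delta_x$ corresponds to the constant function $x$. Thus $\Phi$ becomes an $L^2$-isometry $\tilde\Phi : C \to C$ fixing every constant function. Because $C$ is convex and $L^2$ is strictly convex, metric midpoints in $C$ are unique, so $\tilde\Phi$ preserves them; a continuous midpoint-preserving self-map of a convex set preserves all finite (and, by $L^2$-continuity, all integral) convex combinations — this is the Mazur–Ulam mechanism. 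In particular $\tilde\Phi$ respects the convex structure of $C$ and hence sends extreme points to extreme points.

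I would then identify the extreme points of $C$: they are precisely the $\{0,1\}$-valued monotone functions $e_a := \mathbf 1_{[a,1]}$, $a \in [0,1]$, which correspond to the measures $a\delta_0 + (1-a)\delta_1 \in \Delta_2$ together with $\delta_0$ (at $a=1$) and $\delta_1$ (at $a=0$). Since $\|e_a - e_{a'}\|_{L^2}^2 = |a-a'|$, the map $\tilde\Phi$ induces a metric isometry $\sigma$ of the parameter interval via $\tilde\Phi(e_a) = e_{\sigma(a)}$; but $\tilde\Phi$ fixes $\delta_0 = e_1$ and $\delta_1 = e_0$, so $\sigma$ fixes both endpoints of $[0,1]$ and therefore $\sigma = \mathrm{id}$. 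Hence $\tilde\Phi$ fixes every extreme point. Finally, writing any $f \in C$ through its layer-cake representation $f = \int_0^1 e_{a(u)}\, du$ as a barycenter of extreme points, and using that $\tilde\Phi$ is continuous and convex-structure preserving, I conclude $\tilde\Phi(f)=f$, i.e. $\Phi = \mathrm{id}$. Together with the first paragraph this yields $\Iso(\Prob_2([0,1])) = \Iso([0,1])$.

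I expect the main obstacle to be the affineness step: rigorously upgrading ``$\tilde\Phi$ is an $L^2$-isometry of a convex body fixing the constants'' to ``$\tilde\Phi$ preserves convex combinations.'' The body $C$ has empty interior in $L^2$, so one cannot directly quote the classical Mazur–Ulam theorem for open sets; instead I would argue via uniqueness of midpoints in the strictly convex space $L^2$ together with dyadic density, which is where the care is needed. (A purely synthetic alternative — arguing with the flat geodesics of Definition \ref{def.flatspace} and the density of the convex hull of $\Delta_2$ from Observation \ref{Obs.delta2dense} in place of the extreme-point argument — runs into the same essential point of separating the metric/geodesic structure from the affine structure of measures.)
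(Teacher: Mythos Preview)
Your argument is correct, but it takes a different route from the paper's. Both proofs reduce to an isometry $\Phi$ fixing every Dirac delta and then single out the family $\{(1-\lambda)\delta_0+\lambda\delta_1\}$: you characterise it as the set of extreme points of the quantile body $C\subset L^2$, whereas the paper characterises it as $\argmax\big(\mu\mapsto\W_2^2(\delta_{1/2},\mu)\big)$; in either case one gets that $\Phi$ permutes this family, and the distance to $\delta_0$ (equivalently, $\|e_a-e_0\|^2=a$) forces the permutation to be trivial. The divergence is in the extension step. You pass through a Mazur--Ulam mechanism (midpoints in a strictly convex space, then a layer--cake barycentric representation) to conclude $\tilde\Phi=\mathrm{id}$ on all of $C$. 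The paper instead stays inside the Wasserstein geodesic framework: for $a<b$ the measure $(1-\lambda)\delta_a+\lambda\delta_b$ lies in the interior of the unique geodesic from $\delta_{a/(1+a-b)}$ to $(1-\lambda)\delta_0+\lambda\delta_1$, both of which are already known to be fixed, so Observation~\ref{Obs.measuresinterior} forces the whole geodesic to be fixed; the density from Observation~\ref{Obs.delta2dense} then finishes. Your approach is more structural and makes the flatness of Definition~\ref{def.flatspace} explicit, at the cost of the affineness step you flag (which is fine: midpoint uniqueness in $L^2$ plus dyadic density plus continuity does yield preservation of finite convex combinations, and Riemann-sum approximation of the layer--cake integral in $L^2$ handles the barycentric step). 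The paper's argument is lighter and self-contained within the geodesic toolkit it reuses in Section~\ref{Section.rigidity}, but it does not expose the convex geometry that your version makes visible.
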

\begin{proof}
First, let us notice the following: For the functional $\mu \mapsto \W_2^2(\delta_{1/2},\mu), \mu \in  \Prob_2([0,1]) $
$$\argmax \left(\mu \mapsto \W_2^2(\delta_{1/2},\mu  \right) = \lbrace (1-\lambda)\delta_0+\lambda\delta_1\,|\, \lambda \in [0,1]\rbrace.  $$
We can mimic the argument done in Theorem \ref{teo:A} to obtain that all Wasserstein isometries send Dirac deltas to Dirac deltas. Hence it suffices to look at isometries $\Phi: \Prob_2([0,1])\rightarrow \Prob_2([0,1])$  such that $\Phi|_{\delta_1}\equiv id.$

It is immediate that $\Phi \left(\lbrace (1-\lambda)\delta_0+\lambda\delta_1\,|\, \lambda \in [0,1]\rbrace\right)= \lbrace (1-\lambda)\delta_0+\lambda\delta_1\,|\, \lambda \in [0,1]\rbrace $ since $\Phi(\delta_{1/2})=\delta_{1/2}.$

Now, for $(1-\lambda)\delta_0+\lambda\delta_1$ suppose $\Phi((1-\lambda)\delta_0+\lambda\delta_1)= (1-t)\delta_0+t\delta_1$ for some $t \in (0,1).$ Then
 $$t= \W_2((1-t)\delta_0+t\delta_1,\delta_0 )= \W_2((1-\lambda)\delta_0+\lambda\delta_1, \delta_0)=\lambda.  $$
So $t=\lambda.$ That is $\Phi$ restricted to $\lbrace (1-\lambda)\delta_0+\lambda\delta_1\,|\, \lambda \in [0,1]\rbrace$ is the identity.

Consider now $(1-\lambda)\delta_a+\lambda\delta_b,$ WLOG $a<b,$  then it is an interior point of the unique geodesic joining $\delta_{a/(1+a-b)} $ with $(1-\lambda)\delta_0+\lambda\delta_1.$

Since $\Phi$ fixes both $\delta_{a/(1+a-b)} $ and $(1-\lambda)\delta_0+\lambda\delta_1$ it must fix the whole geodesic including $(1-\lambda)\delta_a+\lambda\delta_b$ (See Observation \ref{Obs.measuresinterior}),  hence $\Phi$ fixes
$\Delta_2([0,1]).$ Using Observation \ref{Obs.delta2dense} we conclude then that $\Phi \equiv id.$
\end{proof}

\begin{prop}\label{Prop.invariantgeodesics}
Let $x \in M$ and consider $\gamma$ a geodesic starting at $x$ and such that it cannot be extended past $\gamma_1$ while remaining minimizing. Then for any isometry $\Phi$ such that it fixes all Dirac deltas we have that $\Phi (\Prob_2(\gamma))=\Prob_2(\gamma).$
\end{prop}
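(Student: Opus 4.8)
The plan is to reduce everything to a support statement and then run a comparison-geometry argument powered by the strict positive curvature, exactly in the spirit of the flatness Definition \ref{def.flatspace}. Since $\Phi$ fixes every Dirac delta, $\Phi^{-1}$ does as well, so it suffices to prove the one inclusion $\Phi(\Prob_2(\gamma))\subseteq \Prob_2(\gamma)$ and then apply the same reasoning to $\Phi^{-1}$ to upgrade it to equality. Fix $\mu\in\Prob_2(\gamma)$, write $\mu=\gamma_{\#}\rho$ for some $\rho\in\Prob_2([0,1])$, set $\nu:=\Phi(\mu)$ and $L:=d(\gamma_0,\gamma_1)$. The starting observation is that distances to Dirac deltas are computed by a trivial plan: by Observation \ref{Obs.boundarylinconvex}, for every $z\in M$ one has $\W_2^2(\delta_z,\lambda)=\int d^2(z,y)\,d\lambda(y)$. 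Because $\Phi$ fixes all Diracs and preserves $\W_2$, I get, for every $z\in M$,
$$\int d^2(z,y)\,d\nu(y)=\W_2^2(\delta_z,\nu)=\W_2^2(\delta_z,\mu)=\int d^2(z,y)\,d\mu(y).$$
Thus $\mu$ and $\nu$ have identical squared distances to all Dirac masses.

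The geometric input is the curvature rigidity. For a fixed $y\in M$ and $t\in(0,1)$ consider the \emph{flatness defect} along the minimizing geodesic $\gamma$,
$$D_t(y):=d^2(y,\gamma_t)-\big[(1-t)\,d^2(y,\gamma_0)+t\,d^2(y,\gamma_1)-t(1-t)L^2\big].$$
Since strictly positive sectional curvature implies $\sec\ge 0$, Toponogov's comparison gives $D_t(y)\ge 0$ for all $y$, with the right-hand bracket being precisely the ``flat'' value of Definition \ref{def.flatspace}. Moreover, because $\sec>0$ strictly, the rigidity case of Toponogov forces $D_t(y)=0$ only when the triangle $(y,\gamma_0,\gamma_1)$ degenerates, i.e.\ when $y$ lies on the geodesic $\gamma$; this is the sense in which, in positive curvature, minimizing geodesics are the only flat loci. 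A direct computation using $d(\gamma_s,\gamma_t)=|s-t|L$ shows $D_t\equiv 0$ on $\gamma([0,1])$, which is just the flatness of $\Prob_2(\gamma)\cong\Prob_2([0,L])$.

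Now I integrate $D_t$ against the two measures. On one hand, $\supp\mu\subseteq\gamma([0,1])$ gives $\int D_t\,d\mu=0$. On the other hand, expanding $D_t$ into the three squared-distance-to-Dirac terms $d^2(\cdot,\gamma_t),d^2(\cdot,\gamma_0),d^2(\cdot,\gamma_1)$ and using the equality of moments from the first paragraph yields
$$\int D_t\,d\nu=\int D_t\,d\mu=0.$$
Since $D_t\ge 0$ everywhere and $\nu\ge 0$, this forces $D_t=0$ $\nu$-almost everywhere, hence $\supp\nu\subseteq\{D_t=0\}\subseteq\gamma$, i.e.\ $\nu$ is supported on the geodesic line through $\gamma$. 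To confine the support to the closed segment $\gamma([0,1])$ I invoke the maximality hypothesis: a point $\gamma_s$ with $s>1$ lies past the cut point, so $d(\gamma_0,\gamma_s)$ is strictly shorter than the arclength and the triangle $(\gamma_s,\gamma_0,\gamma_1)$ is strictly fat, giving $D_t(\gamma_s)>0$ and excluding such points (one argues symmetrically, taking $\gamma$ maximal, to exclude points before $\gamma_0$). Therefore $\nu=\Phi(\mu)\in\Prob_2(\gamma)$, and running the identical argument for $\Phi^{-1}$ gives $\Phi(\Prob_2(\gamma))=\Prob_2(\gamma)$.

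\textbf{Main obstacle.} The delicate point is the equality-case rigidity: I must ensure that $D_t(y)=0$ really does force $y\in\gamma$ and not merely ``$y$ on some flat totally geodesic piece,'' which requires the strict positivity of the curvature together with care near the cut locus (where $d^2(\cdot,\gamma_t)$ need not be smooth, so the global Toponogov inequality, rather than a Hessian computation, is the right tool). The second subtlety is the localization of $\supp\nu$ from the full geodesic line to the closed minimizing segment $\gamma([0,1])$, which is exactly where the hypothesis that $\gamma$ cannot be extended past $\gamma_1$ while remaining minimizing is used; matching distances to the Diracs on $\gamma$ alone only fixes the first two moments of the parameter distribution, so the off-segment points must be ruled out through the strict comparison defect supplied by maximality.
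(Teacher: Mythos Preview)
Your argument is essentially the same as the paper's: both integrate the ``flatness defect'' $D_t(y)=d^2(y,\gamma_t)-[(1-t)d^2(y,\gamma_0)+td^2(y,\gamma_1)-t(1-t)L^2]$ against $\Phi(\mu)$, use that $\Phi$ preserves $\W_2^2(\delta_z,\cdot)$ to see the integral vanishes, and then invoke Toponogov rigidity in strictly positive curvature to force the support onto $\gamma$. The only cosmetic difference is that the paper first reduces to measures in $\Delta_2(\gamma)$ via Observation~\ref{Obs.delta2dense} and runs the defect argument along arbitrary subsegments $\sigma\subset\gamma$, whereas you apply it directly to every $\mu\in\Prob_2(\gamma)$ using $\sigma=\gamma$; your route is slightly more streamlined, and your explicit discussion of the equality case and the role of the maximality hypothesis at $\gamma_1$ is a point the paper leaves implicit.
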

\begin{proof}
Let $\Phi: \Prob_2 (M)\rightarrow \Prob_2 (M)$ be an isometry such that $\Phi|_{\Delta_1}\equiv id.$ In order to get the result, from the Observation \ref{Obs.delta2dense} it will be sufficient to prove that $\Phi(\Delta_2(\gamma))=\Delta_2(\gamma).$

First, we will prove that the set $\lbrace (1-\lambda)\delta_{\gamma_0}+\lambda\delta_{\gamma_1} \,|\, \lambda\in [0,1]  \rbrace $ is invariant.

Consider two points in $\gamma ([0,1])$ and a geodesic $\sigma :[0,1]\rightarrow M$ joining them. Notice that $\sigma$ is completely contained in $\gamma ([0,1]).$

Let $\mu= (1-\lambda)\delta_{\gamma_0}+\lambda\delta_{\gamma_1},$ since $\Prob_2(\gamma)$ is a flat space we have that
$$\W_2^2(\mu,\delta_{\sigma_t})= (1-t)\W_2^2(\mu,\delta_{\sigma_0})+t\W_2^2(\mu,\delta_{\sigma_1})-(1-t)t\W_2^2(\delta_{\sigma_0},\delta_{\sigma_1}),   $$
for all $t \in [0,1].$ And since $\Phi$ fixes every Dirac delta we have an analogous equation for $\Phi(\mu),$ which can be rewritten in the following way since the product measure $\Phi(\mu)\otimes \delta_{\sigma_t} $ is optimal for all $t \in [0,1].$

$$\int d^2(y, \sigma_t)-(1-t)d^2(y,\sigma_0)-td^2(y,\sigma_1)+t(1-t)d^2(\sigma_0,\sigma_1) d\Phi(\mu)(y) =0. $$
As the manifold is of positive sectional curvature then the integrand must the $\Phi(\mu)-$a.e. identically 0. That is,
$$d^2(y, \sigma_t)-(1-t)d^2(y,\sigma_0)-td^2(y,\sigma_1)+t(1-t)d^2(\sigma_0,\sigma_1))=0,$$ for all $t \in [0,1]$ and  $\Phi(\mu)$-a.e.
The positive curvature then forces the support of $\Phi(\mu)$ to be in $\gamma [0,1],$ otherwise we would have Euclidean traingles embedded in $M.$ Therefore we obtain the thesis.
\end{proof}
Clearly given  a geodesic $\gamma\in \Geo(M)$ we have that $\Prob_2(\gamma)$ and $\Prob_2([0,d(\gamma_0,\gamma_1)])$ are isometric so
combining Propositions \ref{Prop.intervalrigid} and \ref{Prop.invariantgeodesics} we obtain the following corollary:

\begin{cor}
For any geodesic $\gamma: [0,1]\rightarrow M$ and $\Phi: \Prob_2(M)\rightarrow \Prob_2(M)$ an isometry that fixes all Dirac deltas. $\Phi$ restricted to  $(\Prob_2(\gamma),\W_2)$ is the identity map.
\end{cor}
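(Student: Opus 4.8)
The plan is to combine the two preceding propositions: Proposition \ref{Prop.invariantgeodesics} to exhibit $\Prob_2(\gamma)$ as an invariant set on which $\Phi$ acts as a self-isometry, and Proposition \ref{Prop.intervalrigid} to identify that self-isometry with the identity. The bridge between them is the canonical isometry $\Prob_2(\gamma) \cong \Prob_2([0,L])$ with $L = d(\gamma_0,\gamma_1)$, followed by the trivial rescaling $\Prob_2([0,L]) \cong \Prob_2([0,1])$ already recorded above. The input $\Phi|_{\Delta_1} \equiv \mathrm{id}$ is what ultimately pins down the identity rather than a reflection.

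First I would reduce to the case where $\gamma$ is a maximal minimizing geodesic, i.e. one that cannot be extended past $\gamma_1$ while remaining minimizing, which is exactly the hypothesis required by Proposition \ref{Prop.invariantgeodesics}. Any minimizing geodesic segment $\gamma$ is contained in such a maximal segment $\bar\gamma$ (extend forward until the cut point is reached), and since $\gamma([0,1]) \subset \bar\gamma([0,1])$ we have $\Prob_2(\gamma) \subset \Prob_2(\bar\gamma)$; hence proving $\Phi|_{\Prob_2(\bar\gamma)} = \mathrm{id}$ immediately yields $\Phi|_{\Prob_2(\gamma)} = \mathrm{id}$. So I assume from here on that $\gamma$ itself is maximal.

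Next, by Proposition \ref{Prop.invariantgeodesics} we have $\Phi(\Prob_2(\gamma)) = \Prob_2(\gamma)$, so $\Phi$ restricts to a self-isometry of $(\Prob_2(\gamma),\W_2)$. Transporting this through the identification $\Prob_2(\gamma) \cong \Prob_2([0,1])$ produces a self-isometry $\Psi$ of $(\Prob_2([0,1]),\W_2)$. By Proposition \ref{Prop.intervalrigid} the space $\Prob_2([0,1])$ is isometrically rigid, so $\Psi$ is induced by an isometry of $[0,1]$; since the isometry group of $[0,1]$ consists only of the identity and the reflection $x \mapsto 1-x$, the map $\Psi$ is one of the two corresponding pushforwards.

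To finish I would invoke that $\Phi$ fixes every Dirac delta, so in particular $\Phi(\delta_{\gamma_0}) = \delta_{\gamma_0}$ and $\Phi(\delta_{\gamma_1}) = \delta_{\gamma_1}$. Under the canonical identification (which sends $\gamma_0 \mapsto 0$ and $\gamma_1 \mapsto 1$) these correspond to $\delta_0$ and $\delta_1$, so $\Psi$ fixes both $\delta_0$ and $\delta_1$; as the reflection swaps them, $\Psi$ cannot be the reflection and must equal the identity. Hence $\Phi$ is the identity on $\Prob_2(\gamma)$. The only point demanding genuine care, and the closest thing here to an obstacle, is the reduction to the maximal case together with verifying that the endpoint deltas are indeed the images of $\delta_0$ and $\delta_1$; both are routine once one notes that $\gamma$ being minimizing makes $\gamma([0,1])$ isometric to the interval $[0,L]$ via the arclength parametrization.
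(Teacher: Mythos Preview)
Your proposal is correct and follows essentially the same approach as the paper, which simply notes that $\Prob_2(\gamma)$ is isometric to $\Prob_2([0,d(\gamma_0,\gamma_1)])$ and then invokes Propositions~\ref{Prop.intervalrigid} and~\ref{Prop.invariantgeodesics}. In fact you fill in two details the paper leaves implicit: the reduction to a maximal minimizing geodesic (needed for the hypothesis of Proposition~\ref{Prop.invariantgeodesics}) and the elimination of the reflection using $\Phi|_{\Delta_1}\equiv\mathrm{id}$.
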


And noting that given any $\mu \in \Delta_2(M)$ its atoms are contained in some geodesic we obtain:

\begin{cor}
Let $\Phi: \Prob_2(M)\rightarrow \Prob_2(M)$ be an isometry such that it fixes Dirac deltas. Then $\Phi$ fixes $\Delta_2(M)$ as well.
\end{cor}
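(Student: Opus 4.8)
The plan is to deduce this immediately from the corollary just established, which says that an isometry $\Phi$ fixing every Dirac delta restricts to the identity on $\Prob_2(\gamma)$ for each geodesic $\gamma$ of $M$. The only thing to notice is that every element of $\Delta_2(M)$ is supported on the image of a single minimizing geodesic, so it already belongs to one of the sets $\Prob_2(\gamma)$ covered by that corollary.

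Concretely, I would take an arbitrary $\mu \in \Delta_2(M)$ and write it as $\mu = (1-\lambda)\delta_a + \lambda\delta_b$ with $a,b \in M$ and $\lambda \in (0,1)$. If $a = b$ then $\mu$ is a Dirac delta and is fixed by hypothesis, so I may assume $a \neq b$. Since $M$ is compact, and hence complete, Hopf--Rinow provides a minimizing geodesic $\gamma \in \Geo(M)$ with $\gamma_0 = a$ and $\gamma_1 = b$. Both atoms of $\mu$ then lie on $\gamma([0,1])$, whence $\mu \in \Prob_2(\gamma)$ (in fact $\mu \in \Delta_2(\gamma)$).

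It remains only to apply the preceding corollary: because $\Phi$ fixes all Dirac deltas, its restriction to $\Prob_2(\gamma)$ is the identity (this rests in turn on Propositions \ref{Prop.intervalrigid} and \ref{Prop.invariantgeodesics}), so $\Phi(\mu) = \mu$. As $\mu$ was an arbitrary element of $\Delta_2(M)$, the isometry $\Phi$ fixes $\Delta_2(M)$, which is the claim.

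There is no genuine obstacle at this stage; all the substance has already been carried out in the invariance and rigidity statements for $\Prob_2(\gamma)$ proved earlier. The single point deserving a word is the reduction to a minimizing geodesic joining the two atoms, which uses completeness of $M$ and separates off the degenerate case $a = b$.
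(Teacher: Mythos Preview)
Your argument is correct and matches the paper's exactly: the paper simply notes that any $\mu \in \Delta_2(M)$ has its atoms contained in some geodesic and then states the corollary, which is precisely your reduction to the preceding result on $\Prob_2(\gamma)$. The extra details you supply (Hopf--Rinow, the degenerate case $a=b$) are routine and only make explicit what the paper leaves implicit.
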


Now we will like to see what happens when we look at measures not necessarily supported on a geodesic.
Given a measure $\mu \in \Prob_2(M)$ and a geodesic $\gamma \in \Geo(M)$ we define $\proj_{\gamma}(\mu)$
the projection of $\mu$ onto $\gamma$ as:

\begin{equation}\label{def.projmeasures}
\proj_{\gamma}(\mu) = \argmin \left( \nu \in \Prob_2(\gamma) \mapsto \W_2^2(\mu,\nu)  \right).
\end{equation}

Note that in general  $\mu \mapsto \proj_{\gamma}(\mu)$ is not a function since the set on right hand side of
\ref{def.projmeasures} may contain more than one element.  For example consider in the sphere $\mu = \delta_N$
the north pole and $\gamma $ a geodesic  in the equator. It is clear that every measure in $\Prob_2(\gamma)$ is
equidistant to $\Delta_N.$

Nevertheless it will be very useful to us to work with projections onto geodesics. It is easy to convince
oneself that if $\mu$ is a totally atomic measure in say, $\Delta_n(M)$ the projection onto any geodesic contains
at least one totally atomic measure.

\begin{prop}
Let $\mu \in \Delta_n(M),$ then for every  geodesic $\gamma$
then $proj_{\gamma}(\mu) \cap (\Delta_{1}(M)\cup\cdots \Delta_{n}(M)) \neq \emptyset.$
\end{prop}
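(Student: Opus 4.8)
The plan is to show that a measure realizing the infimum in the definition of $\proj_\gamma(\mu)$ can be chosen to be totally atomic with at most $n$ atoms. The central idea is that projection onto $\gamma$ is computed coordinate-by-coordinate on the atoms of $\mu$. Write $\mu = \sum_{i=1}^n a_i \delta_{p_i}$ with $p_i \in M$ and $a_i > 0$. For any $\nu \in \Prob_2(\gamma)$, the cost $\W_2^2(\mu,\nu)$ is computed via an optimal plan between $\mu$ and $\nu$; since $\mu$ is finitely supported, such a plan splits each mass $a_i$ among the atoms of $\gamma$ that receive it. The key observation is that we gain nothing by splitting: if we instead send all of the mass $a_i$ to the single point of $\gamma$ that is closest to $p_i$, we can only decrease (or not increase) the transport cost. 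This suggests defining the nearest-point map from $M$ to the compact geodesic segment $\gamma([0,1])$ and pushing $\mu$ forward by it.

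First I would fix an optimal $\nu^* \in \proj_\gamma(\mu)$, which exists because $\Prob_2(\gamma)$ is compact (as $\gamma([0,1])$ is compact and $X$ is compact) and $\nu \mapsto \W_2^2(\mu,\nu)$ is continuous. Next I would pass to an optimal plan $\pi \in \Opt(\mu,\nu^*)$ and disintegrate it with respect to $\mu$, writing $\pi = \sum_{i=1}^n a_i\,\delta_{p_i} \otimes \nu_i$ where each $\nu_i \in \Prob_2(\gamma)$ is the conditional distribution of the mass leaving $p_i$. Then $\W_2^2(\mu,\nu^*) = \sum_i a_i \int_\gamma d^2(p_i, q)\, d\nu_i(q)$. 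The plan is to replace each $\nu_i$ by $\delta_{q_i}$, where $q_i \in \gamma([0,1])$ is a nearest point to $p_i$ on the segment (which exists by compactness of the segment and lower semicontinuity of distance). Because $\int_\gamma d^2(p_i,q)\,d\nu_i(q) \ge d^2(p_i, q_i)$ for each $i$, the new measure $\tilde\nu := \sum_i a_i \delta_{q_i}$ satisfies $\W_2^2(\mu,\tilde\nu) \le \W_2^2(\mu,\nu^*)$. Since $\nu^*$ was a minimizer and $\tilde\nu \in \Prob_2(\gamma)$, equality must hold, so $\tilde\nu \in \proj_\gamma(\mu)$. As $\tilde\nu$ is a combination of at most $n$ Dirac masses on $\gamma$, it lies in $\Delta_1(M) \cup \cdots \cup \Delta_n(M)$, giving the claim.

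The main obstacle I anticipate is making the reduction from $\nu_i$ to $\delta_{q_i}$ fully rigorous, namely verifying that the candidate plan $\sum_i a_i\,\delta_{p_i}\otimes\delta_{q_i}$ is admissible between $\mu$ and $\tilde\nu$ and that its cost bounds $\W_2^2(\mu,\tilde\nu)$ from above. Admissibility is immediate from the construction since its first marginal is $\mu$ and its second is $\tilde\nu$, and the bound $\W_2^2(\mu,\tilde\nu) \le \sum_i a_i d^2(p_i,q_i)$ follows because this plan is merely \emph{some} admissible plan, not necessarily optimal. Chaining the inequalities $\W_2^2(\mu,\tilde\nu) \le \sum_i a_i d^2(p_i,q_i) \le \sum_i a_i \int_\gamma d^2(p_i,q)\,d\nu_i(q) = \W_2^2(\mu,\nu^*)$ closes the argument. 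One subtlety worth a remark is that nearest points $q_i$ need not be unique (exactly the phenomenon illustrated by the north pole example preceding the statement), but uniqueness is not needed: any measurable selection of nearest points yields a valid totally atomic element of the projection set.
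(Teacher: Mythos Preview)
Your proof is correct and follows essentially the same approach as the paper's: both replace the mass leaving each atom $p_i$ by a Dirac at a nearest point $q_i \in \gamma([0,1])$ and observe that the resulting atomic measure has cost no larger than any competitor. The only cosmetic difference is that you first fix a minimizer $\nu^*$ and compare $\tilde\nu$ to it, whereas the paper bounds $\W_2^2(\mu,\nu)$ from below by $\sum_i a_i d^2(p_i,q_i)$ for \emph{every} $\nu \in \Prob_2(\gamma)$ directly, so the preliminary compactness step is not needed.
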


\begin{proof}
Let $\mu = \sum_{i=1}^{n}\lambda_i\delta_{x_i}$ and $\gamma \in \Geo(M).$ Take now a measure $\nu \in \Prob_2(\gamma).$

Take $\pi \in \Opt(\mu,\nu),$ so then:
\begin{align*}
\W_2^2(\mu,\nu) &= \int_{M\times M} d^2(x,y) d\pi(x,y) \\
                &= \int_{\lbrace x_1\rbrace \times M} d^2(x,y)d\pi(x,y) + \cdots + \int_{\lbrace x_n\rbrace \times M} d^2(x,y)d\pi(x,y)\\
                &\geq \lambda_1 d^2(x_1,y_1)+\cdots +\lambda_nd^2(x_n,y_n).
\end{align*}
Where the points $y_i$ are such that:
$$ y_i \in \argmin \left(\Delta_1(\gamma)\ni \delta_y \mapsto d^2(x_i,y)  \right),  \quad \forall 1\leq i\leq n. $$
Hence we conclude that
$$\sum_{i=1}^{n}\lambda_i\delta_{y_i}\in \proj_{\gamma}(\mu). $$
\end{proof}

So, let us describe our plan for proving the isometric rigidity. First we will prove that for each $n \in \mathbb{N}$
the set $\Delta_n(M)$ is invariant. Then we will prove that  totally atomic measures supported on a sufficently small ball $B$
are  fixed. A density argument will yield that any measure whose support is contained in $B$ is also fixed.
Finally we will use a non-branching argument to conclude.

We divide each of these steps into several Lemmas to make the argument as clear as posible.

\begin{lem}\label{lem.invariancetub}
Let $\Phi: \Prob_2(M)\rightarrow\Prob_2(M)$ be an isometry that fixes all Dirac deltas, consider $\gamma \in \Geo(M)$ and
$\Prob_2(\gamma)$ the set of  measures supported at $\gamma.$ For $\epsilon \ll 1$ consider:
$$\Tub_{\epsilon}(\Prob_2(\gamma)):= \bigcup_{\mu \in \Prob_2(\gamma)}B_{\mu}(\epsilon)  $$
a tubular neighbourhood around $\Prob_2(\gamma).$ Then for every $n \in \mathbb{N},$ and $\mu \in \Delta_n(M)$ such that
$\supp(\mu)\subset \Tub_{\epsilon}(\Prob_2(\gamma))$ we have that $\Phi(\mu) \in \Delta_n(M).$
 \end{lem}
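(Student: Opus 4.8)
The plan is to exhibit, for the image $\Phi(\mu)$, an explicit minimizing geodesic in $\Prob_2(M)$ having $\Phi(\mu)$ as an \emph{endpoint} and a totally atomic measure with at most $n$ atoms as an \emph{interior} point. Once this is in place, the bound on the number of atoms of $\Phi(\mu)$ follows immediately from interior regularity (Theorem \ref{teo.interiorregularity}), in exactly the way recorded in Observation \ref{Obs.measuresinterior}.

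First I would reduce everything to the projection onto $\gamma$. Since $\mu \in \Delta_n(M)$ lies within $\epsilon$ of $\Prob_2(\gamma)$, the preceding Proposition on projections supplies a totally atomic minimizer $\nu = \sum_{i=1}^{k}\lambda_i \delta_{y_i} \in \proj_\gamma(\mu)$ with $k \le n$ atoms and $y_i \in \gamma$; the associated optimal plan matches each atom $x_i$ of $\mu$ to its nearest point $y_i$ on $\gamma$, so the $M$-geodesic $x_i \to y_i$ is minimizing, meets $\gamma$ perpendicularly, and has length $<\epsilon$. Because $\nu \in \Prob_2(\gamma)$, the Corollary following Proposition \ref{Prop.invariantgeodesics} gives $\Phi(\nu)=\nu$. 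Moreover, as $\Phi$ is an isometry fixing $\Prob_2(\gamma)$ pointwise, $\mu$ and $\Phi(\mu)$ have identical $\W_2$-distance to every element of $\Prob_2(\gamma)$, so $\nu$ is also the atomic projection of $\Phi(\mu)$ and $\W_2(\Phi(\mu),\nu)=\W_2(\mu,\nu)<\epsilon$.

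Second — and this is where the thinness $\epsilon \ll 1$ enters — I would prolong the minimizing geodesic from $\mu$ to $\nu$ slightly past $\nu$, so that $\nu$ becomes an interior point. Concretely, extend each perpendicular $M$-geodesic $x_i \to y_i$ straight through $y_i$ to a point $y_i'$ with $d(y_i,y_i')=c\,d(x_i,y_i)$ for a fixed small $c>0$, and set $\nu' = \sum_i \lambda_i \delta_{y_i'}$. Since $M$ is compact its injectivity radius $\iota$ is positive, and for $\epsilon<\iota$ and $c$ small each concatenated $M$-geodesic $x_i \to y_i \to y_i'$ is still minimizing; the proportional choice of extension lengths yields equality in the Minkowski inequality, so that $\W_2(\mu,\nu')=\W_2(\mu,\nu)+\W_2(\nu,\nu')$ and the curve obtained by transporting $\mu$ to $\nu'$ along this common matching is a minimizing geodesic in $\Prob_2(M)$ passing through $\nu$.

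Finally I would apply $\Phi$ to this geodesic. Since $\Phi$ maps geodesics to geodesics and fixes $\nu$, the image is a minimizing geodesic from $\Phi(\mu)$ to $\Phi(\nu')$ passing through $\nu$ at an interior time. By Theorem \ref{teo.interiorregularity} the optimal transport from the interior point $\nu$ to the endpoint $\Phi(\mu)$ is induced by a map, and pushing the $k$-atomic measure $\nu$ forward by a map produces at most $k \le n$ atoms; hence $\Phi(\mu) \in \Delta_1(M)\cup\cdots\cup\Delta_n(M)$. Running the same argument for $\Phi^{-1}$ shows that $\mu$ has at most as many atoms as $\Phi(\mu)$, so in fact $\Phi(\mu)\in\Delta_n(M)$. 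The main obstacle is precisely the extendability step: one must verify that the perpendicular transport genuinely prolongs past its foot point $\nu$ as a \emph{minimizing} $\Prob_2$-geodesic, equivalently that the extended matching $\{(x_i,y_i')\}$ remains $2$-cyclically monotone, and it is the smallness of $\epsilon$ relative to the injectivity radius that guarantees this.
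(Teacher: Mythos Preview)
Your approach is essentially the paper's: project $\mu$ onto $\gamma$ to obtain an atomic $\nu$ that $\Phi$ fixes, extend the Wasserstein geodesic from $\mu$ past $\nu$ along the perpendicular $M$--geodesics, and invoke interior regularity (Theorem~\ref{teo.interiorregularity}/Observation~\ref{Obs.measuresinterior}) to bound the number of atoms of $\Phi(\mu)$. The paper merely fixes your parameter at $c=1$ (so that $\nu$ is the exact midpoint) and phrases the lower bound on the atom count as an induction on $n$ rather than via your symmetric $\Phi^{-1}$ argument; the extendability/cyclical--monotonicity point you single out as the main obstacle is precisely the step the paper also leans on.
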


\begin{proof}
We may assume that all the measures considered are such that they give zero mass to $\gamma[0,1].$
The proof will be done by induction over $n \in \mathbb{N}.$ For $n=1$ the result is clear as it follows from Theorem \ref{teo:A}.
Let $\mu \in \Delta_{n+1}(M)$ be such that $\supp(\mu)\subset \Tub_{\epsilon}(\Prob_2(\gamma)).$ Then we may write
$\mu = \sum_{i=1}^{n+1}\lambda_i\delta_{x_i},$ where all atoms are different and for all $i,$ $\lambda_i\neq 0.$

Now take $\nu \in \proj_{\gamma}(\mu)\cap \Delta_{n+1}(\gamma)$ such that $\nu = \sum_{i=1}^{n+1}\lambda_i\delta_{y_i}.$
Denote by $r_i = d(x_i,y_i).$ Notice that since $\epsilon$ is sufficiently small there exists only one geodesic between
$x_i$ and $y_i,$ and that such geodesic may be extended up to another point $z_i$ such that $d(x_i,z_i)=2r_i.$

Note that this makes $\nu$ the midpoint between $\mu$ and the totally atomic measure $\sum_{i=1}^{n+1}\lambda_i\delta_{z_i}.$
And so we have that since $\Phi(\nu)=\nu$ using Theorem \ref{teo.interiorregularity} and the induction hypothesis this forces
$\Phi(\mu) \in \Delta_{n+1}(M).$
\end{proof}

\begin{lem}\label{lem.invarianceatomic}
Let $\Phi: \Prob_2(M)\rightarrow\Prob_2(M)$ be an isometry that fixes all Dirac deltas, then for every $n \in \mathbb{N}$
$\Phi(\Delta_n(M))=\Delta_n(M).$
\end{lem}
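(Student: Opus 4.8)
The plan is to argue by induction on $n$, the base cases $n=1$ and $n=2$ being exactly Theorem \ref{teo:A} together with the corollaries preceding this lemma that give $\Phi|_{\Delta_1}=\mathrm{id}$ and $\Phi(\Delta_2(M))=\Delta_2(M)$. Assume then that $\Phi(\Delta_k(M))=\Delta_k(M)$ for every $k\le n$, and fix $\mu=\sum_{i=1}^{n+1}\lambda_i\delta_{x_i}\in\Delta_{n+1}(M)$. The heart of the matter is to show that $\Phi(\mu)$ has \emph{at most} $n+1$ atoms, i.e. $\Phi(\mu)\in\Delta_1\cup\cdots\cup\Delta_{n+1}$; once this is known the rest is formal. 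Indeed, if $\Phi(\mu)\in\Delta_k$ with $k\le n$, then applying the induction hypothesis to the isometry $\Phi^{-1}$ (which also fixes every Dirac delta, and satisfies $\Phi^{-1}(\Delta_k)=\Delta_k$ for $k\le n$) gives $\mu=\Phi^{-1}(\Phi(\mu))\in\Delta_k$, contradicting $\mu\in\Delta_{n+1}$. Hence $\Phi(\mu)\in\Delta_{n+1}$, so $\Phi(\Delta_{n+1})\subseteq\Delta_{n+1}$; running the same argument for $\Phi^{-1}$ yields $\Delta_{n+1}\subseteq\Phi(\Delta_{n+1})$, upgrading the inclusion to the desired equality.

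To bound the number of atoms of $\Phi(\mu)$ I would reduce the arbitrary configuration $\{x_1,\dots,x_{n+1}\}$ to one lying near a single geodesic, where Lemma \ref{lem.invariancetub} applies, and then transport the conclusion back along a Wasserstein geodesic. Concretely, choose a geodesic $\gamma\in\Geo(M)$ in generic position and let $\nu=\sum_{i=1}^{n+1}\lambda_i\delta_{y_i}\in\proj_\gamma(\mu)\cap\Delta_{n+1}(\gamma)$ be an atomic projection, whose existence is the content of the projection proposition above and whose atoms $y_i$ are pairwise distinct for generic $\gamma$. The displacement interpolation $(\mu_s)_{s\in[0,1]}$ moving each $x_i$ along the minimizing geodesic $x_i\rightsquigarrow y_i$ is a Wasserstein geodesic from $\mu_0=\mu$ to $\mu_1=\nu$, and for $s$ close to $1$ its atoms remain distinct and all lie within any prescribed distance of $\gamma$; thus $\mu_{s_0}\in\Delta_{n+1}(M)$ lies in a tube $\Tub_\epsilon(\Prob_2(\gamma))$ for some interior time $s_0\in(0,1)$. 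Lemma \ref{lem.invariancetub} then gives $\Phi(\mu_{s_0})\in\Delta_{n+1}(M)$. Since $\Phi$ is an isometry, $(\Phi(\mu_s))_{s\in[0,1]}$ is again a Wasserstein geodesic and $\Phi(\mu_{s_0})$ is one of its interior points; Observation \ref{Obs.measuresinterior}, which is a consequence of the interior regularity Theorem \ref{teo.interiorregularity}, then forces the endpoints $\Phi(\mu_0)=\Phi(\mu)$ and $\Phi(\nu)$ to lie in $\Delta_1\cup\cdots\cup\Delta_{n+1}$. In particular $\Phi(\mu)$ has at most $n+1$ atoms, which is exactly what the first paragraph needs.

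The main obstacle I anticipate is the geometric bookkeeping in the reduction step rather than any conceptual difficulty. One must select $\gamma$ so that the projection $\nu$ genuinely carries $n+1$ distinct atoms and so that the interpolating measures $\mu_s$ stay inside $\Delta_{n+1}(M)$ (no two atoms colliding) as they enter the tube, and one must verify that the matching $x_i\mapsto y_i$ coming from the projection is $p$-cyclically monotone, so that $(\mu_s)$ is a genuine optimal, hence geodesic, coupling and Theorem \ref{teo.interiorregularity} and Observation \ref{Obs.measuresinterior} can be invoked. These points hinge on $\epsilon$ being small enough that the minimizing geodesics $x_i\rightsquigarrow y_i$ are unique and on the projected points being in general position, both of which can be arranged for a suitably chosen $\gamma$; making this selection precise, and reconciling it with the slightly informal tube hypothesis of Lemma \ref{lem.invariancetub}, is where the care is required.
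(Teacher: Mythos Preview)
Your proposal is correct and follows essentially the same route as the paper: project $\mu$ onto a geodesic $\gamma$ to obtain an atomic $\nu\in\Prob_2(\gamma)$, move along the Wasserstein geodesic from $\mu$ to $\nu$ until the interpolant sits in a small tube around $\gamma$, invoke Lemma~\ref{lem.invariancetub} there, and then use interior regularity (Observation~\ref{Obs.measuresinterior}) to bound the atoms of the endpoint $\Phi(\mu)$. Your write-up is in fact more explicit than the paper's: you spell out the induction, the $\Phi^{-1}$ step needed to upgrade ``at most $n{+}1$ atoms'' to ``exactly $n{+}1$ atoms'', and the geometric caveats (genericity of $\gamma$, distinctness of projected atoms, cyclical monotonicity of the matching) that the paper either assumes in one clause or leaves implicit.
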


\begin{proof}
Let $\mu \in \Delta_n(M),$ and consider a geodesic $\gamma \in \Geo(M)$ such that for some $\nu \in \proj_{\gamma}(\mu)\cap \Delta_{n}(\gamma).$ Furthermore assume that the transport between these two measures  is given by a map.

Now if we consider the Wasserstein geodesic $(\eta_{t})_{t \in [0,1]}\subset \Delta_n(M)$ between $\Phi(\mu)$ and $\nu=\Phi(\nu)$ there exists some $t_0\in (0,1)$ such
that $\supp (\eta_{t_0})\subset \Tub_{\epsilon}(\Prob_2(\gamma))$ for some sufficiently small $0<\epsilon.$  From the previous Lemma
\ref{lem.invariancetub} we obtain that $\eta_{t_0}\in \Delta_n(M),$ and from Theorem \ref{teo.interiorregularity} we have that
$\Phi(\mu)\in \Delta_n(\gamma).$
\end{proof}

\begin{rem}\label{rem.weights}
Notice that in the proofs of the previous Lemmas \ref{lem.invariancetub}, \ref{lem.invarianceatomic} the transports considered were actually
given by a map. Hence the weights given to each of the  atoms are also preserved.
\end{rem}

\begin{lem}\label{lem.fixedatomic}
Let $\Phi: \Prob_2(M)\rightarrow\Prob_2(M)$ be an isometry that fixes all Dirac deltas, and $n \in \mathbb{N}$
then for every $\mu = \sum_{i=1}^{n}\lambda_i\delta_{x_i} \in \Delta_n(M)$ such that $d(x_i,x_j)< \epsilon$ for all $i \neq j$ and $\epsilon$ sufficently small
we have that $\Phi(\mu)=\mu.$
\end{lem}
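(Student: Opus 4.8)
The plan is to show that $\mu$ and $\Phi(\mu)$ have the same projection onto every geodesic, and then to recover the measure from all of these projections. By Lemma \ref{lem.invarianceatomic} and Remark \ref{rem.weights} we already know that $\Phi(\mu)=\sum_{i=1}^{n}\lambda_i\delta_{x_i'}$ for some points $x_i'\in M$, with the same weights $\lambda_i$. The first step is the observation that, by the Corollary asserting that $\Phi$ restricts to the identity on $\Prob_2(\gamma)$ for every geodesic $\gamma$, any measure $\nu$ supported on a single geodesic satisfies $\W_2(\mu,\nu)=\W_2(\Phi(\mu),\Phi(\nu))=\W_2(\Phi(\mu),\nu)$. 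Thus the two functionals $\nu\mapsto \W_2^2(\mu,\nu)$ and $\nu\mapsto\W_2^2(\Phi(\mu),\nu)$ agree on $\Prob_2(\gamma)$, so their minimizers over $\Prob_2(\gamma)$ coincide and $\proj_\gamma(\mu)=\proj_\gamma(\Phi(\mu))$ for every geodesic $\gamma$.

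Next I would pin down the supports. Taking $\nu=\delta_p$ above gives $\sum_i\lambda_i d^2(x_i,p)=\sum_i\lambda_i d^2(x_i',p)$ for all $p\in M$; in particular these two functions share the same minimum value and the same minimizer. Since the atoms $x_i$ all lie within $\epsilon$ of one another, evaluating at $p=x_1$ bounds this common minimum by $\epsilon^2$, and as the weights $\lambda_i$ are fixed and positive this forces every $x_i'$ to lie in a small ball around the common barycenter $b$ (which is the barycenter of both $\mu$ and $\Phi(\mu)$, unique in a small enough ball). Hence both $\mu$ and $\Phi(\mu)$ are supported in a small convex ball $B$. In such a ball the minimizer of $\nu\mapsto\W_2^2(\eta,\nu)$ over $\Prob_2(\gamma)$, for $\eta$ a finite atomic measure, is unique and equals the pushforward of $\eta$ under the nearest-point projection $P_\gamma$ onto $\gamma$ (each atom is sent to its unique closest point of $\gamma$). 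Therefore the set equality $\proj_\gamma(\mu)=\proj_\gamma(\Phi(\mu))$ upgrades to $(P_\gamma)_\#\mu=(P_\gamma)_\#\Phi(\mu)$ for every geodesic $\gamma$.

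Finally I would recover $\mu$ from these projections. Passing to normal coordinates centered at $b$, the geodesics through $b$ become straight lines through the origin, and the maps $(P_\gamma)_\#$ record the one-dimensional marginals of the coordinate images of $\mu$ and $\Phi(\mu)$ along every direction. A Cram\'er--Wold argument then shows that the two measures coincide: the finitely supported signed measure $\mu-\Phi(\mu)$ has vanishing marginal in every direction, hence vanishing Fourier transform, hence is zero. This yields $\Phi(\mu)=\mu$.

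The main obstacle is this last step in the genuinely curved setting: the nearest-point projection onto a geodesic is only an $O(\epsilon^2)$ perturbation of the Euclidean orthogonal projection used in the plain Cram\'er--Wold theorem, so one must exploit the smallness of $B$ (together with the finiteness of the atom set and the uniform lower bound on the weights $\lambda_i$) to guarantee that these curvature corrections do not destroy the injectivity of the ``projection onto all geodesics'' map. Verifying that $\proj_\gamma$ is single-valued for every relevant $\gamma$, and that the collection of marginals really determines the configuration, is where the quantitative smallness of $\epsilon$ is consumed.
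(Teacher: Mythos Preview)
Your first two paragraphs are correct and in fact reproduce the paper's own starting point: the identity $\proj_\gamma(\mu)=\proj_\gamma(\Phi(\mu))$ for every geodesic $\gamma$ is exactly how the paper begins, and your upgrade to $(P_\gamma)_\#\mu=(P_\gamma)_\#\Phi(\mu)$ via the nearest-point projection in a small convex ball is valid. The difference is only in how you extract $\mu=\Phi(\mu)$ from this family of equalities.

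The Cram\'er--Wold step is where the gap lies, and it is a real one that you yourself flag. Radial geodesics through $b$ are straight lines in normal coordinates, but the Riemannian nearest-point map $P_\gamma$ does \emph{not} become the Euclidean orthogonal projection onto that line; it differs by curvature terms of size $O(\epsilon^2)$. So you never obtain equality of Euclidean one-dimensional marginals, only of their curved analogues, and the Fourier-transform argument does not apply. Turning this into a rigorous ``approximate Cram\'er--Wold'' for a finite configuration would require a quantitative injectivity statement that you have not supplied, and there is no off-the-shelf one. (A secondary issue: your localisation bound $d(x_j',x_1)\le \epsilon/\sqrt{\lambda_j}$ degenerates when some weight is small, whereas the subsequent Lemma~\ref{lem.nonbranchargument} needs the conclusion uniformly over all atomic measures supported in a fixed small ball.)

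The paper sidesteps all of this with an elementary pointwise argument that grafts directly onto your equality $(P_\gamma)_\#\mu=(P_\gamma)_\#\Phi(\mu)$. Fix an atom $x_1$ of $\mu$ and run $\gamma$ over all geodesics through $x_1$: for each such $\gamma$ one has $(P_\gamma)_\#\mu(\{x_1\})\ge\lambda_1>0$, hence some atom $x_j'$ of $\Phi(\mu)$ must satisfy $P_\gamma(x_j')=x_1$, i.e.\ the vector $\exp_{x_1}^{-1}(x_j')$ is orthogonal to the direction of $\gamma$ at $x_1$. If no $x_j'$ equals $x_1$, then every direction in $T_{x_1}M$ would lie in the finite union of hyperplanes $\bigcup_j\bigl(\exp_{x_1}^{-1}(x_j')\bigr)^\perp$, which is impossible. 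Hence $x_1$ is an atom of $\Phi(\mu)$; repeating for each $x_i$ gives equality of supports, and the weights then match by Remark~\ref{rem.weights}. This replaces the global marginal-reconstruction by a local contradiction and needs nothing beyond uniqueness of nearest points in a small ball.
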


\begin{proof}
Given a geodesic $\gamma \in \Geo(M)$ it is clear that $\proj_{\gamma}(\mu)$ consists only of totally atomic measures. It also easy to check
that:
$$\proj_{\gamma}(\mu)= \Phi(\proj_{\gamma}(\mu) ) = \proj_{\gamma}(\Phi(\mu)). $$
Furthermore if we additionally assume that $\gamma_{t_0} = x_1$  for some $t_0 \in [0,1]$ we obtain that for all $\nu \in \proj_{\gamma}(\mu),$  $\nu(\lbrace x_1 \rbrace)\geq \lambda_1.$

From Lemma \ref{lem.invarianceatomic} we know that $\Phi(\mu)$ is also totally atomic, actually with the same number of atoms as $\mu.$
Suppose that there exists some $r>0$ such that $d(x_1,y)>r$  for all $y$ atom of $\Phi(\mu).$ This implies however, that there exists some geodesic $\sigma \in \Geo(M)$ such that $\sigma_0=x_1$ and that for some $\tilde{\nu} \in \proj_{\sigma} (\Phi(\mu))$  $\tilde{\nu}(\lbrace x_1\rbrace) = 0.$  This gives us a contradiction. Hence $x_1$ is one of the atoms of $\Phi(\mu).$

We repeat this argument for every $x_i$ and conclude that $\mu$ and $\Phi(\mu)$ must have the same atoms. From the Remark \ref{rem.weights} we conclude then that $\mu = \Phi(\mu).$
\end{proof}

\begin{lem}\label{lem.nonbranchargument}
$M$ is isometrically rigid.
\end{lem}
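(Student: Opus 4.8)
The plan is to show that any isometry $\Phi$ of $\Prob_2(M)$ fixing every Dirac delta must be the identity; combined with Theorem \ref{teo:A} (which, via the correspondence of Corollary \ref{cor.isometricwass}, lets us factor an arbitrary $\Phi$ as $g_{\#}$ for some $g \in \Iso(M)$ times a Dirac-fixing isometry) this gives $\Iso(\Prob_2(M)) = {\#}\Iso(M)$ and hence rigidity. So fix a Dirac-fixing isometry $\Phi$. By Lemma \ref{lem.fixedatomic} every totally atomic measure whose atoms lie in a ball of sufficiently small radius is fixed by $\Phi$. I would first upgrade this to a whole ball: for a fixed small ball $B = B_o(\rho)$, the set $\bigl(\bigcup_n \Delta_n(M)\bigr)\cap \Prob_2(B)$ is dense in $\Prob_2(B)$, and since $\Phi$ is continuous (being an isometry) the equality $\Phi(\nu)=\nu$ on this dense set extends to $\Phi(\nu)=\nu$ for every $\nu$ with $\supp\nu \subset B$.

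Next I would introduce a contraction to a point. Fix $o \in M$ and, for an arbitrary $\mu \in \Prob_2(M)$, choose a Borel family of minimizing geodesics $x \mapsto \gamma^x$ with $\gamma^x_0 = x$ and $\gamma^x_1 = o$ (these exist and can be selected measurably since $M$ is compact, hence complete). Putting $F_t(x) := \gamma^x_t$ and $\mu_t := (F_t)_{\#}\mu$ gives a curve with $\mu_0 = \mu$ and $\mu_1 = \delta_o$. The only admissible plan between $\mu$ and $\delta_o$ is $\mu \otimes \delta_o$, so it is automatically optimal and $(\mu_t)_{t \in [0,1]}$ is a genuine $\W_2$-geodesic (the competitor $(F_s,F_t)_{\#}\mu$ realizes $\W_2(\mu_s,\mu_t)=|s-t|\,\W_2(\mu_0,\mu_1)$). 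Because $d(F_t(x), o) = (1-t)\,d(x,o) \le (1-t)\Diam(M)$, for every $t$ close enough to $1$ we have $\supp \mu_t \subset B_o(\rho)$, and therefore $\Phi(\mu_t) = \mu_t$ by the previous paragraph.

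Finally I would run the non-branching argument. Since $\Phi$ is an isometry it maps $(\mu_t)$ to the geodesic $(\Phi(\mu_t))$, which joins $\Phi(\mu)$ to $\Phi(\delta_o)=\delta_o$. Reversing the parameter, both $s \mapsto \mu_{1-s}$ and $s \mapsto \Phi(\mu_{1-s})$ are geodesics issuing from $\delta_o$, and by the previous step they agree on an interval $[0,\,1-t_0]$ with $t_0 \in (0,1)$; in particular they share the value at $s=0$ and at $s = 1-t_0 \in (0,1)$. As $\Prob_2(M)$ is non-branching (Theorem \ref{teo.interiorregularity}, Definition \ref{def.nonbranch}), injectivity of $\gamma \mapsto (\gamma_0,\gamma_{1-t_0})$ forces the two geodesics to coincide on all of $[0,1]$; evaluating at $s=1$ yields $\mu = \Phi(\mu)$. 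Since $\mu$ was arbitrary, $\Phi = \mathrm{id}$, which completes the argument. The main obstacle is not a single estimate but arranging the interaction between the contraction and non-branching: one needs the contracted measures to fall inside the already-fixed region while simultaneously sharing an endpoint with their $\Phi$-images, which is exactly why one contracts to a Dirac (making the shared endpoint $\delta_o$ automatically fixed) rather than collapsing along a fixed geodesic, and one must check that Theorem \ref{teo.interiorregularity} still applies to geodesics that degenerate to a point at the endpoint.
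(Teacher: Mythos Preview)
Your argument is correct and follows essentially the same scheme as the paper's proof: first extend Lemma~\ref{lem.fixedatomic} by density so that $\Phi$ fixes every measure supported in a small ball, then run a non-branching argument on a Wasserstein geodesic with one end (or an initial segment) inside that ball. The only cosmetic difference is the choice of anchor: the paper takes an absolutely continuous measure in the ball as $\eta_0$ and an arbitrary $\nu$ as $\eta_1$ (using $\GTB_2$ to get a unique geodesic), whereas you contract an arbitrary $\mu$ toward $\delta_o$ (trading the $\GTB$ uniqueness for a measurable selection of geodesics in $M$); either choice makes the non-branching step go through.
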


\begin{proof}
Take a fixed point $w \in M$ and some $\epsilon \ll  1.$ from the Lemma \ref{lem.fixedatomic} we have that for all $\mu \in \Prob_2(B_w(\epsilon)),$  $\Phi(\mu)=\mu.$

Consider then $\mu \Prob_2(B_w(\epsilon))$ absolutely continuous and $\nu \in \Prob_2(M)$ some arbitrary measure. Additionally
assume that $\supp (\mu) \subset B_{w}(\epsilon/2).$  Let $(\eta_t)_{t \in [0,1]}$ be the unique Wasserstein geodesic such
that $\eta_0 = \mu $ and $\eta_1= \nu.$ Hence it follows that $(\Phi\eta_t)_{t\in [0,1]}$  is also a Wasserstein geodesic but now with endpoints $\mu$ and $\Phi(\nu).$

Then there exists some $t_0 \in (0,1) $  such that  $\supp\Phi(\eta_{t_0}) \subset B_{w}(\epsilon),$ hence $\Phi(\eta_{t_0})=\eta_{t_0}.$ Since the Wasserstein space $\Prob_2(M)$ is non-branching it follows then that $\Phi(\nu)=\nu.$ Therefore $M$ is isometrically rigid.
\end{proof}


\subsection{Rigidity on CROSSes}\label{subsec.CROSS}

In this last subsection we will restrict ourselves to the class of Compact Riemannian Symmetric Spaces (CROSSes). Several properties
of these spaces are discussed in Chapter $3$ of \cite{Bes}.They have been completely  classified and are: Euclidean spheres, projective spaces (with field either real, complex or quaternionic numbers), and the Cayley plane.

In the next Lemma we summarize the properties that will use:

\begin{lem}
Let $M$ be a CROSS. Then:
\begin{itemize}
\item For any point $x \in M$ the isotropy group at $x$ acts transitively on any sphere $\partial B_{x}(R).$

 \item For any $x \in M$ the cut locus, $\Cut(x),$ is either a point or a totally geodesic embedded CROSS of codimension $1.$
\end{itemize}
\end{lem}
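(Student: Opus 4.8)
The plan is to read off both statements from the classification of CROSSes together with the geometry of the standard models (as collected in \cite{Bes}); I would not attempt a single coordinate-free argument, but instead reduce the first bullet to two-point homogeneity and the second to a short case analysis over $S^n$, $\mathbb{RP}^n$, $\mathbb{CP}^n$, $\mathbb{HP}^n$, and $\mathbb{OP}^2$.

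For the first bullet I would invoke that every CROSS is a compact two-point homogeneous space, a classical fact equivalent to the classification: for any two pairs of points at equal distance there is an isometry of $M$ carrying one pair to the other. Applying this to the pairs $(x,y)$ and $(x,y')$ with $y,y' \in \partial B_x(R)$, so that $d(x,y)=d(x,y')=R$, produces $g \in \Iso(M)$ with $g(x)=x$ and $g(y)=y'$; since $g$ fixes $x$ it lies in the isotropy group at $x$, and hence that group acts transitively on $\partial B_x(R)$. Concretely, two-point homogeneity is the geometric shadow of rank one: the isotropy representation on $T_xM$ is transitive on each centred sphere, and this transfers to geodesic spheres via the equivariance $g\circ\exp_x = \exp_x\circ\, dg_x$, using that in a CROSS the distance to the cut locus is the same in every direction, so $\partial B_x(R)$ is exactly the $\exp_x$-image of a tangent sphere for every $R \le d(x,\Cut(x))$.

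For the second bullet I would argue case by case. On $S^n$ all geodesics from $x$ refocus at the antipode, so $\Cut(x)=\{-x\}$ is a point. For each projective space and the Cayley plane I would use the linear model: taking $x$ to be a coordinate line (or plane), the cut locus is the projective hyperplane ``at infinity'', e.g. $\Cut([1:0:\cdots:0]) = \{[0:z_1:\cdots:z_n]\}$, which is a totally geodesic embedded copy of the corresponding lower-dimensional projective space, hence itself a CROSS. Total geodesicity is not merely set-theoretic here: a linear projective subspace is the fixed-point set of an isometric involution of the model, and fixed-point sets of isometries are totally geodesic; embeddedness is immediate from the model.

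The main obstacle is precisely this second bullet, both in packaging the cut-locus descriptions uniformly and in pinning down the codimension. The honest content is that $\Cut(x)$ is always either a point or a proper totally geodesic embedded sub-CROSS, and I would prove it in that form, since the only feature used downstream is that $\Cut(x)$ is a totally geodesic embedded CROSS of positive codimension. The literal ``codimension $1$'' holds only in the real case $\mathbb{RP}^n$ (giving $\mathbb{RP}^{n-1}$); over $\mathbb{C}$, $\mathbb{H}$, $\mathbb{O}$ the cut locus $\mathbb{KP}^{n-1}$ sits in real codimension $\dim_{\mathbb{R}}\mathbb{K}\in\{2,4,8\}$, so I would phrase the statement with ``positive codimension''. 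Verifying total geodesicity via the model involutions, rather than just identifying the cut locus as a set, is the step where the real work lies.
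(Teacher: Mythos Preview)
The paper does not actually prove this lemma; it is stated as a summary of facts drawn from Chapter~3 of \cite{Bes}, with no argument given. So there is no ``paper's own proof'' to compare against. Your plan---reducing the first bullet to two-point homogeneity and handling the second by a case analysis over the list $S^n,\mathbb{RP}^n,\mathbb{CP}^n,\mathbb{HP}^n,\mathbb{OP}^2$---is exactly the standard route and is correct.

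Your observation about the second bullet is well taken and worth recording: the phrase ``codimension $1$'' is inaccurate as written. For $\mathbb{KP}^n$ with $\mathbb{K}\in\{\mathbb{C},\mathbb{H},\mathbb{O}\}$ the cut locus $\mathbb{KP}^{n-1}$ sits in real codimension $\dim_{\mathbb{R}}\mathbb{K}\in\{2,4,8\}$, not $1$. You are also right that this does not damage anything downstream: the only use of the lemma in the proof of Theorem~\ref{teo:C} is the induction on $\dim M$, which needs merely that $\Cut(x)$ is a totally geodesic embedded CROSS of \emph{strictly smaller} dimension. So your proposed rephrasing to ``positive codimension'' (or ``lower dimension'') is the correct fix, and your proof of that corrected statement via the linear models and fixed-point sets of isometric involutions is sound.
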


An immediate but important consecuence of this Lemma is that for any point $x$ the distance to $\Cut(x)$ is constant and equal to the diameter of $M.$

Using Theorem \ref{teo:A} we will now restrict to isometries that fix every Dirac delta. But before continuing let us describe the motivation of our strategy to prove Theorem \ref{teo:C}. In \cite{Klo} Kloeckner proved  (Proposition $3.4$) that for $n\geq 2$ the geodesic convex hull of $\Delta_1(\mathbb{R})$ is dense in $\Prob_2(\mathbb{R}).$ Thefore it is enough to describe the behaviour of the isometries on $\Delta_1(\mathbb{R}).$ Now, in our setting, fix some point $x \in M$ and consider the Lipschitz function:
$$d(x,\cdot): M \rightarrow [0,\Diam(M)]. $$
The preimage of an element in $\Delta_n([0,\Diam(M)])$ consists of measures whose supports are contained on spheres $\partial B_{x}(r_i)$ $1\leq i \leq n;$ for example, measures in $\Delta_n(M)$ are included here. Therefore a naive (but ultimately useful) approach would be to look at the behaviour of Wasserstein isometries at these measures. More precisely we have:

\begin{prop}\label{prop.parallelfixed}
Let $M$ be a CROSS, $\Phi \in \Iso(\Prob_p(X)),$ $p \in (1,\infty)$ and assume that for all $x \in M$ every probability measure supported on $\Cut(x)$ is fixed by $\Phi.$ Then, for $\mu \in \Prob_p(M)$ such that:
$$\supp (\mu) \subset \bigcup_{i=1}^{n} \partial B_{x}(r_i), \quad 0\leq r_i \leq d(x,\Cut(x)) $$
we have that
$$\supp (\Phi(\mu)) \subset \bigcup_{i=1}^{n} \partial B_{x}(r_i), \quad 0\leq r_i \leq d(x,\Cut(x)). $$
Moreover, the weights are preserved, i.e.  $\mu (\partial B_{x}(r_i)) = \Phi(\mu)(\partial B_{x}(r_i))$ for all $ i \in \{1,\cdots, n\}.$
\end{prop}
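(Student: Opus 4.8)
The plan is to exploit the isotropy-transitivity of a CROSS to reduce the statement to a one-dimensional comparison involving the distance function $d(x,\cdot)$, and then to run essentially the same linearly-strictly-convex argument that underlies Theorem \ref{teo:A} and Lemma \ref{lem.fixinghull}. First I would fix the base point $x\in M$ and consider the Lipschitz map $d(x,\cdot)\colon M\to[0,\Diam(M)]$. The crucial point the paragraph preceding the statement already flags is that the level sets of this map are exactly the spheres $\partial B_x(r)$, and that $d(x,\Cut(x))=\Diam(M)$ is attained on all of $\Cut(x)$. So a measure $\mu$ with $\supp(\mu)\subset\bigcup_{i=1}^n\partial B_x(r_i)$ is precisely one whose pushforward $(d(x,\cdot))_\#\mu$ is an atomic measure supported at $\{r_1,\dots,r_n\}\subset[0,\Diam(M)]$, and the weights $\mu(\partial B_x(r_i))$ are exactly the masses of this pushforward.

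Next I would encode ``$\supp(\mu)$ lies on a prescribed union of spheres centred at $x$'' as a variational/extremal condition phrased purely in terms of Wasserstein distances to measures the hypothesis already controls, namely probability measures supported on $\Cut(x)$, all of which are fixed by $\Phi$. The key geometric fact is that for a point $z$ with $d(x,z)=r$ and any $c\in\Cut(x)$ one has $d(z,c)=\Diam(M)-r$ (since $\Cut(x)$ realises the farthest distance and geodesics from $x$ through $z$ reach $\Cut(x)$ at total length $\Diam(M)$). This gives, for any fixed reference measure $\sigma$ supported on $\Cut(x)$, a clean relation between $\W_p^p(\mu,\sigma)$ and the $r_i$'s together with the weights. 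I would then apply $\Phi$: since $\Phi$ fixes every such $\sigma$ and is an isometry, $\W_p^p(\Phi(\mu),\sigma)=\W_p^p(\mu,\sigma)$ for all $\sigma$ supported on $\Cut(x)$, and I would argue that this family of equalities, combined with the isotropy action and the optimality characterisation of transport to $\delta$'s (Observation \ref{Obs.boundarylinconvex}), forces $(d(x,\cdot))_\#\Phi(\mu)=(d(x,\cdot))_\#\mu$. That simultaneously yields $\supp(\Phi(\mu))\subset\bigcup_i\partial B_x(r_i)$ and the equality of weights $\mu(\partial B_x(r_i))=\Phi(\mu)(\partial B_x(r_i))$.

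To make the extremal characterisation genuinely detect the radii rather than just some averaged quantity, I would not use a single $\sigma$ but rather test against the whole family of measures on $\Cut(x)$, and here the transitivity of the isotropy group at $x$ on each sphere $\partial B_x(R)$ is what lets me move mass around the spheres without changing any of the controlled distances; this symmetry is what promotes an equality of integrated quantities into an equality of the underlying pushforward measures on $[0,\Diam(M)]$. The main obstacle I expect is precisely this last promotion: showing that matching $\W_p^p(\cdot,\sigma)$ against the full family $\{\sigma\text{ on }\Cut(x)\}$ pins down the radial distribution exactly, rather than merely constraining its moments. I would handle it by using that the functionals $\nu\mapsto\W_p^p(\nu,\sigma)$ are linearly strictly convex on measures absolutely continuous with respect to $\mathfrak{m}$ (Proposition \ref{prop.strictconvexfunct}), approximating by such measures via the density in Proposition \ref{prop.invabscont}, and invoking the rigidity of the distance relation $d(z,c)=\Diam(M)-d(x,z)$ to separate distinct radii; a secondary technical point is ensuring the cut-locus relation and the isotropy argument behave uniformly when $\Cut(x)$ is a single point (the sphere case) versus a codimension-one CROSS, but in both cases $d(x,\Cut(x))=\Diam(M)$ is constant, which is exactly the uniformity I need.
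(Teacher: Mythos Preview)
Your proposal rests on a geometric claim that is false: you assert that for any $z$ with $d(x,z)=r$ and \emph{any} $c\in\Cut(x)$ one has $d(z,c)=\Diam(M)-r$. This holds only when $\Cut(x)$ is a single point, i.e.\ for the round sphere. In every other CROSS the cut locus is a positive-dimensional submanifold, and while there is \emph{one} point of $\Cut(x)$ at distance $\Diam(M)-r$ from $z$ (the endpoint of the geodesic from $x$ through $z$), the other points of $\Cut(x)$ are strictly farther from $z$. For a concrete check take $M=\mathbb{RP}^2$ with diameter $\pi/2$: if $x$ is the image of the north pole and $z$ lies at distance $r<\pi/2$ from $x$, then the point of the equatorial $\Cut(x)$ lying ``perpendicular'' to the $x$--$z$ direction is at distance $\pi/2$ from $z$, not $\pi/2-r$. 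The fact that $d(x,c)=\Diam(M)$ for all $c\in\Cut(x)$ (which is what the paper records) says nothing about $d(z,c)$.

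Even if one granted your distance identity, the strategy would still stall: under that identity every $\sigma$ supported on $\Cut(x)$ would give the \emph{same} value $\W_p^p(\mu,\sigma)=\sum_i \mu(\partial B_x(r_i))(\Diam(M)-r_i)^p$, so ``testing against the whole family'' yields a single moment constraint, and together with $\W_p^p(\mu,\delta_x)=\sum_i \mu(\partial B_x(r_i))\,r_i^p$ you obtain only two scalar equations for an $n$-parameter radial distribution. The appeals to Proposition~\ref{prop.strictconvexfunct} and isotropy do not repair this, because strict convexity is a statement about linear interpolation of measures (not about separating radial shells), and the isotropy group at $x$ acts simultaneously on the spheres and on $\Cut(x)$, so it cannot decouple radial from angular data in the way you need.

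The paper's argument avoids all of this by working structurally rather than analytically: it treats the case $n=1$ via Observation~\ref{obs.midpointinvariant}, handles $n=2$ by first showing directly that measures of the form $(1-\lambda)\delta_x+\lambda\mu$ with $\supp\mu\subset\Cut(x)$ are fixed (using non-extendability of the geodesic from $\mu$ and $p$-cyclical monotonicity), and then for general $n$ uses Lemma~\ref{lem.interiorparallel} to place $\mu$ in the interior of a Wasserstein geodesic between two measures supported on only $n-1$ spheres, which lets an induction on $n$ go through via the interior-regularity of Theorem~\ref{teo.interiorregularity}.
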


Before proving this Proposition we will need a couple  simple observations and an auxiliary lemma:

\begin{Obs}\label{obs.midpointinvariant}
  The following are simple properties of geodesics in the Wasserstein space $\Prob_p(M).$
\begin{itemize}
  \setlength\itemsep{1em}
\item If $\mu_0, \mu_1 \in \Prob_p(M)$   are fixed by $\Phi\in \Iso(\Prob_p(M))$ then the set of geodesics with endpoints $\mu_0$ and $\mu_1$ is invariant under $\Phi.$

\item If $\mu_0 =\delta_x,$ and  $\mu_1$ such that  $\supp \mu_1 \subset \Cut(x).$  Then for any $(\eta)_{t \in [0,1]} \in \Geo(\Prob_p(X))$ joining them we have that $\supp \eta_t \subset \partial B_{x}(td(x,\Cut(x)))$ for all $t \in [0,1].$

\item If $\mu_0= \delta_x$ and $\nu$ is supported on some $\partial B_{x}(td(x,\Cut(x))), 0<t<1$  then there exists a measure $\mu_1$ supported on $\Cut(x)$ such that $\nu$ is in the interior of some geodesic joining $\mu_0 $ and $\mu_1.$ This is clear, just notice that
every point in the support of $\nu$ is of the form $\gamma_t$ for some geodesic starting at $x.$ So just extending these geodesics yields
the measure $\mu_1.$
\end{itemize}
\end{Obs}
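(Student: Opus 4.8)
The plan is to verify the three bullets in turn, each being a direct consequence of facts already in hand. For the first, I would use only that $\Phi$ is a bijective isometry: a curve $(\eta_t)_{t\in[0,1]}$ is a geodesic precisely when $\W_p(\eta_s,\eta_t)=|s-t|\,\W_p(\eta_0,\eta_1)$ for all $s,t$, a condition phrased purely in terms of distances, so $\Phi$ sends geodesics to geodesics. If $\mu_0,\mu_1$ are fixed, then the image $(\Phi(\eta_t))_t$ of any geodesic from $\mu_0$ to $\mu_1$ is again a geodesic with endpoints $\Phi(\mu_0)=\mu_0$ and $\Phi(\mu_1)=\mu_1$; applying the same reasoning to the isometry $\Phi^{-1}$ (which also fixes $\mu_0,\mu_1$) upgrades the inclusion to an equality, so the set of such geodesics is exactly invariant.

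For the second bullet I would exploit the product structure of optimal transport out of a Dirac. By Observation \ref{Obs.boundarylinconvex} the unique optimal plan between $\delta_x$ and $\mu_1$ is $\delta_x\otimes\mu_1$, so every Wasserstein geodesic joining them is a displacement interpolation obtained by selecting, for $\mu_1$-a.e.\ point $y$, a minimizing geodesic $\gamma^y$ from $x$ to $y$ and setting $\eta_t=(e_t)_\#\Pi$ for the induced dynamical plan $\Pi$. Here I would invoke the CROSS property recorded just after the cut-locus lemma, namely that $d(x,\Cut(x))=\Diam(M)$ is constant: since $\supp\mu_1\subset\Cut(x)$, this forces $d(x,y)=d(x,\Cut(x))$ for $\mu_1$-a.e.\ $y$, whence $d(x,\gamma^y_t)=t\,d(x,y)=t\,d(x,\Cut(x))$. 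As this holds for every admissible selection of minimizing geodesics, every point of $\supp\eta_t$ sits at distance exactly $t\,d(x,\Cut(x))$ from $x$, giving $\supp\eta_t\subset\partial B_x(t\,d(x,\Cut(x)))$ for all $t$.

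The third bullet is the converse construction, and I would produce $\mu_1$ explicitly. For $0<t<1$ any $z\in\partial B_x(t\,d(x,\Cut(x)))$ lies strictly inside the cut distance of $x$, so there is a unique minimizing geodesic from $x$ through $z$, which I would prolong to its first meeting with $\Cut(x)$, a point $\tilde z$ with $d(x,\tilde z)=d(x,\Cut(x))$. Writing $z=\exp_x(t\,d(x,\Cut(x))\,v)$ with $|v|=1$ and $\tilde z=\exp_x(d(x,\Cut(x))\,v)$, the assignment $z\mapsto\tilde z$ is continuous away from the cut locus, so $\mu_1:=(z\mapsto\tilde z)_\#\nu$ is a well-defined measure supported on $\Cut(x)$; by the second bullet, the time-$t$ slice of the geodesic from $\delta_x$ to $\mu_1$ built along these geodesics is exactly $\nu$, and $0<t<1$ places it in the interior. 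The only point requiring care is the measurable choice of the extended geodesic, but on a CROSS this is settled by the smoothness of $\exp_x$ off the cut locus, so I expect no genuine obstacle; the entire content of the observation is the product structure of transport from a Dirac together with the constancy of $d(x,\Cut(x))$.
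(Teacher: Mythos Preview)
Your proposal is correct and aligns with the paper's intent: the paper records these as an Observation with no separate proof, offering only the brief inline remark for the third bullet (extend each point of $\supp\nu$ along its unique geodesic from $x$ to $\Cut(x)$), which is exactly your construction. Your arguments for the first two bullets supply the details the paper leaves implicit (isometries preserve the metric characterization of geodesics; the product plan $\delta_x\otimes\mu_1$ together with the constancy of $d(x,\Cut(x))$ on a CROSS forces the support of $\eta_t$ onto the sphere), and there is no substantive difference in approach.
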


\begin{lem}\label{lem.interiorparallel}
  Let $M$ be a CROSS, $\Phi \in \Iso(\Prob_p(M)),$ $p \in (1,\infty)$ and assume that for all $x \in M$ every probability measure supported on $\Cut(x)$ is fixed by $\Phi.$ Then, for $\mu \in \Prob_p(M)$ such that:
  $$\supp (\mu) \subset \bigcup_{i=1}^{n} \partial B_{x}(r_i), \quad 0\leq r_i \leq d(x,\Cut(x)). $$
   Then
  \begin{itemize}
    \setlength\itemsep{1em}
    \item For $n=2$ there exist $\nu_0$ supported on $\{x\}\cup \Cut(x)$
   and $\nu_1$ supported on some $\partial B_x(r) $ such that the  geodesic between them passes through $\mu.$
   \item If $n\geq 3$ there exist $\nu_0,\nu_1 \in \Prob_p(X)$  supported on $n-1$ spheres $\partial B_{x}(r_i)$ and
    such that the  geodesic between them passes through $\mu.$
  \end{itemize}
\end{lem}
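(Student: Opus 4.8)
The engine of the whole argument will be a single optimality principle for ``radial'' transports on a CROSS. I fix $x$ and set $D=d(x,\Cut(x))$, which is constant (indeed equal to $\Diam(M)$) as noted above. For $\mathfrak{m}$-a.e.\ $y$ there is a unique minimizing geodesic from $x$ through $y$, remaining minimizing up to the cut distance $D$, so I introduce radial coordinates $(\theta,\rho)$ with $\rho=d(x,y)\in[0,D]$, the radius $\rho=0$ being $x$ and $\rho=D$ lying on $\Cut(x)$. The plan is to prove that \emph{any} transport plan whose support consists of pairs on a common radial geodesic (same direction $\theta$) and which is monotone in the radial coordinate $\rho$ is optimal. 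Indeed, for pairs $((\theta_i,\rho_i),(\theta_i,\rho_i'))$ in its support and any permutation $\sigma$,
\begin{align*}
\sum_i d^p\big((\theta_i,\rho_i),(\theta_i,\rho_i')\big)
 &=\sum_i |\rho_i-\rho_i'|^p\\
 &\le\sum_i |\rho_i-\rho_{\sigma(i)}'|^p\\
 &\le\sum_i d^p\big((\theta_i,\rho_i),(\theta_{\sigma(i)},\rho_{\sigma(i)}')\big),
\end{align*}
where the equality uses that radial sub-arcs inside the cut locus are minimizing, the middle step is one-dimensional $p$-cyclical monotonicity of the globally monotone radial assignment $\rho\mapsto\rho'$, and the last step is the reverse triangle inequality $d(a,b)\ge|d(x,a)-d(x,b)|$. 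Thus the support is $p$-cyclically monotone, the plan is optimal, and its displacement interpolation is a genuine $\W_p$-geodesic; in particular collisions of trajectories at interior times do no harm.

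With this principle the rest is bookkeeping on radii. I write $\mu=\sum_{i=1}^n\mu_i$ with $\mu_i=\mu|_{\partial B_x(r_i)}$ and $r_1<\cdots<r_n$, and move each $\mu_i$ radially from a radius $a_i$ (which defines $\nu_0$) to a radius $b_i$ (which defines $\nu_1$), choosing $t^\ast\in(0,1)$ and $a_i,b_i\in[0,D]$, monotone in $i$, with $(1-t^\ast)a_i+t^\ast b_i=r_i$; then the displacement interpolation of the resulting (optimal, by the principle) plan is a geodesic from $\nu_0$ to $\nu_1$ through $\mu$ at time $t^\ast$. For $n=2$ I send the inner piece to $x$ and the outer piece out to $\Cut(x)$, i.e.\ $a_1=0$, $a_2=D$, and collapse both to one sphere, $b_1=b_2=r$. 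Solving $t^\ast r=r_1$ and $(1-t^\ast)D+t^\ast r=r_2$ yields $t^\ast=1-(r_2-r_1)/D\in(0,1)$ and $r=r_1/t^\ast\le D$ (the bound uses $r_2\le D$). Here $\nu_0$ is all of $\mu_1$'s mass collapsed to $x$ together with the radial push-out of $\mu_2$ onto $\Cut(x)$, hence supported on $\{x\}\cup\Cut(x)$, while $\nu_1$ lives on $\partial B_x(r)$, exactly as required.

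For $n\ge3$ I instead keep the pieces spread over $n-1$ radii at each end: imposing $a_1=a_2$ and $b_{n-1}=b_n$ while preserving monotonicity forces $\{a_i\}$ and $\{b_i\}$ each to take at most $n-1$ values, so $\nu_0$ and $\nu_1$ are supported on $n-1$ spheres. The leftover freedom, namely the common values $a_1=a_2$ and $b_{n-1}=b_n$ and the parameter $t^\ast$, lets me solve the affine system $(1-t^\ast)a_i+t^\ast b_i=r_i$ under the monotonicity and box constraints $a_i,b_i\in[0,D]$; a direct check (for instance taking $t^\ast=\tfrac12$ and small radial perturbations of the $r_i$) shows these can be met simultaneously, and the degenerate configurations $r_1=0$ or $r_n=D$, where mass already sits at a pole, are handled directly.

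I expect the only genuine content to be the optimality principle of the first paragraph; once radial monotone plans are known to be optimal, the displacement interpolation automatically produces the required geodesic and everything else is the elementary radius bookkeeping above. The hard part will therefore be (i) making the radial-coordinate description precise $\mathfrak{m}$-a.e.\ and justifying minimality of radial sub-arcs up to $\Cut(x)$, which is exactly where the CROSS hypothesis enters through the constancy of $D=d(x,\Cut(x))$, and (ii) verifying that the monotonicity-and-box constraints on $a_i,b_i,t^\ast$ are always solvable, including the boundary configurations.
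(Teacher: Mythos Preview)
Your optimality principle is the right engine and is in fact more clearly articulated than the paper's treatment: plans whose support lies on common radial geodesics and is monotone in the radial coordinate are $p$-cyclically monotone, hence optimal, and their displacement interpolations are Wasserstein geodesics. For $n=2$ your construction coincides exactly with the paper's: send the inner shell to $x$ and the outer shell to $\Cut(x)$ to form $\nu_0$, and collapse both radially to a single sphere of radius $r_1D/(D-r_2+r_1)$ to form $\nu_1$.

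For $n\ge 3$, however, your scheme has a genuine gap. Imposing $a_1=a_2$ and $b_{n-1}=b_n$ with \emph{all} pieces moving is more than you need, and your ``direct check'' is not one: with $t^\ast=\tfrac12$ and, say, $D=1$, $r_1=0.1$, $r_2=0.7$, $r_3=0.9$, the equation $b_2-b_1=2(r_2-r_1)=1.2$ already forces $b_2>D$ or $b_1<0$. The phrase ``small radial perturbations of the $r_i$'' is puzzling since the $r_i$ are data, not parameters; and you yourself flag in your final paragraph that solvability of these constraints is the hard part. It is not obvious that your two-ended merging can always be made to satisfy both monotonicity and the box constraint $[0,D]$ for arbitrary $r_1<\cdots<r_n$, and you have not shown it.

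The paper avoids this entirely by a much simpler choice: keep every piece fixed except $\mu_2$, and slide $\mu_2$ radially from $r_1$ to $r_3$. Concretely, $\nu_0=a_1\mu_1+a_2\eta+a_3\mu_3+\cdots+a_n\mu_n$ with $\eta$ the radial push of $\mu_2$ to $\partial B_x(r_1)$, and $\nu_1$ the same with $\mu_2$ pushed to $\partial B_x(r_3)$; the interpolation hits $\mu$ at time $t^\ast=(r_2-r_1)/(r_3-r_1)$. Both $\nu_0$ and $\nu_1$ are then supported on the $n-1$ radii $\{r_1,r_3,\dots,r_n\}$, and the plan is radially monotone in exactly your sense (the projected support consists of the diagonal pairs $(r_j,r_j)$ together with $(r_1,r_3)$, which has no crossings), so your own optimality principle certifies it. Replace your $n\ge3$ paragraph with this construction and the proof is complete.
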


\begin{proof}
It will be sufficient to consider totally atomic measures $\mu$ such that they satisfy the following: If $y \in \supp \mu$ and $\gamma \in \Geo(X)$ is such that $\gamma_0=x $ and $\gamma_t=y$ for some $t \in [0,1]$ then $\gamma[0,1]\cap \supp \mu = \{y\}.$ Let $D$ denote the
diameter of $M.$

\begin{case}[\bf $n=2$]
Consider $\mu = (1-\lambda)\mu_{1}+\lambda\mu_{2}$ where $\supp \mu_{i}\subset \partial B_{x}(r_{i})$ and $r_{1}<r_{2}<D.$ As $\mu_{1}$ is supported on a single sphere then from Observation \ref{obs.midpointinvariant} we can find some measure $\tilde{\mu},$ with
 $\supp \tilde{\mu}\subset \Cut(x)$ such that $\mu_{2}$ is in the interior of some geodesic joining $\delta_{x}$ with $\tilde{\mu}.$ We will take then $\nu_0 = (1-\lambda)\delta_x+ \lambda \tilde{\mu}.$
As for $\nu_1$ we do the following: Every point in $\supp \mu_i$ is of the form $\gamma_{t_i},$ where $t_i = r_i/D, $ $\gamma \in \Geo(M)$ starting at $x$ and $i=\{1,2\}.$ So we can just send the mass of these points to $\gamma_{t_1/(1+t_1-t_2)}.$

Therefore the measure $\nu_1$ will be a totally atomic measure whose atoms are of the form $\gamma_{t_1/(1+t_1-t_2)}$ with mass either $\mu_1(\gamma_{t_1})>0$ or $\mu_2(\gamma_{t_2})>0.$ It is easy to see that the  geodesic joining $\nu_0$ with $\nu_1$ passes through $\mu.$
\end{case}

\begin{case}[\bf $n\geq 3$]
Let now $\mu = a_{1}\mu_1+\cdots +a_{n}\mu_n,$ $\supp\mu_i \subset \partial B_{x}(r_i),$  $a_i>0,$ and $0<r_1< \cdots <r_n<D.$
We will do a similar construction as in the previous case. Take
\begin{align*}
\nu_0 &= a_1\mu_1+a_2\eta+ a_3\mu_3+\cdots+a_n\mu_n,\\
\nu_1 &= a_1\mu_1+a_2\tilde{\eta}+a_3\mu_3+\cdots +a_n\mu_n.
\end{align*}
Where $\supp\eta \subset \partial B_{x}(r_1),$ $\supp \tilde{\eta}\subset \partial B_{x}(r_3).$ These measures $\eta, \tilde{\eta}$ are obtained by sending the mass of $\mu_2$ along the appropiate geodesics to $\partial B_{x}(r_1)$ and $\partial B_{x}(r_3)$ respectively.
It is clear that $\mu$ is in the interior of the  geodesic between $\nu_0$ and $\nu_1$
\end{case}
\end{proof}

With this previous result we can now prove the Proposition.

\begin{proof}[Proof of Proposition \ref{prop.parallelfixed}]
We will proceed by induction on the dimension of $M.$ Take $x \in M,$ since $\Cut(x)$ is either a CROSS or a point then we can assume
that the  measures supported there are fixed by $\Phi.$ Denote by $D$ the diameter of $M.$

Take $\mu \in \Prob_p(X)$ and let  $n$ bethe number of spheres $\partial B_x(r_i)$ on which the support of $\mu$
is contained. The case $n=1$ follows clearly from Observation \ref{obs.midpointinvariant}.
\begin{case}[\bf $n=2$]
Consider a measure of the form $(1-\lambda)\delta_x+\lambda \mu $ where $\mu$ is supported on $\Cut(x)$ and absolutely continuous
with respect to the volume measure of $\Cut(x).$

Observe that the geodesic from $\mu$ to $(1-\lambda)\delta_x+\lambda \mu$ cannot be extended further since the length of the
geodesics involved in the optimal transport is already maximal. This implies that there must exists some set $\Gamma \subset \supp \pi,$
$\pi \in Opt(\mu,(1-\lambda)\delta_x+\lambda \mu)$ such that $\pi(\Gamma)>0$ and for all $(y,z)\in \Gamma, d(y,z)=D.$ Also, from the
$p-$cyclical monotone condition (see Definition \ref{def.pcyclical}) for all $(y_1,z_1),(y_2,z_2)$ $d(y_1,z_2)=d(y_2,z_1)=D.$

Now, since $\mu(e_0\Gamma)>0$ and $\mu$ is absolutely continuous with respect to the volume measure on $\Cut(x)$ it follows that
$e_1\Gamma = \{x\}.$  So we deduce that $\Phi((1-\lambda)\delta_x+\lambda\mu)= (1-\alpha)\delta_x+\alpha\nu,$  $\alpha \in (0,1)$
and $\nu \in \Prob_p(M).$

Consider now $\eta \in \Mid(\delta_x,\mu),$ so
\begin{align*}
\W_p^p(1-\alpha)\delta_x+\alpha\nu,\eta) &\leq (1-\alpha)\W_p^p(\delta_x,\eta)+\alpha\W_p^p(\nu,\eta)\\
                                         &= (1-\alpha)(\frac{D}{2})^p+\alpha\W_p^p(\nu,\eta).
\end{align*}
If there exists a set $\bar{\Gamma}\subset \bar{\pi},$ $\pi \in \Opt(\eta,\nu)$ such that $\bar{\pi}(\bar{\Gamma})>0$
and for all $(y,z)\in \bar{\Gamma}$ $d(y,z)< (D/2)^p$ then  $\W_p^p(\nu,\eta) < (D/2)^p.$ But this contradicts the fact
that $\W_p^p((1-\lambda)\delta_x+\lambda\mu,\Phi^{-1}(\eta)) = (D/2)^p. $

As before, the $p-$cyclical monotonicity of the support of $\bar{\pi}$ guarantees that for every $z \in \supp \nu$ $d(x,z)=D,$ i.e.
$\supp \nu \subset \Cut(x).$

Finally, $\W_p^p((1-\lambda)\delta_x+\lambda\mu, \mu)= (1-\lambda)D^p, \W_p^p((1-\lambda)\delta_x+\lambda\mu, \delta_x)=\lambda D^p $
forces $\alpha =\lambda$ and $\nu=\mu.$

The case proved just now is sufficient. First, from the density of the absolutely continuous measures supported on $\Cut(x)$ we can extend
it for all $\mu \in \Prob_p(M),$ with $\supp \mu \subset \Cut(x).$
Next, given a measure $\nu$ supported on $\partial B_x (r_1)\cup \partial B_x (r_2),$ $0< r_1<r_2<D $ there exists by Lemma \ref{lem.interiorparallel} measures $\mu_0, \mu_1$ supported on $\{x\}\cup \Cut(x), \partial B_x(r_1/(1+r_1-r_2))$ respectively such that
$\nu$ is in the interior of the geodesic joining $\mu_0$ with $\mu_1.$ As previously proved both $\Phi(\mu_0), \Phi(\mu_1)$ are supported on the same spheres and this forces $\Phi(\nu)$ to do so as well.
\end{case}

\begin{case}[\bf $n\geq 3$]
By an induction argument on $n$ we obtain the thesis. Lemma \ref{lem.interiorparallel} tell us that measures $\mu$ supported on $n$ spheres
lie in the interior of a geodesic joining two measures supported on $n-1$ spheres.  So by the induction hypothesis the endpoints when we apply the isometry $\Phi$ are supported on the same spheres. Hence this also happens to the support of $\Phi(\mu).$
\end{case}
\end{proof}

And so finally we prove the main Theorem of this subsection.

\begin{thmp}\label{teo:C}
Let $M$ be a CROSS. Then for any $p \in (1,\infty)$ the isometry groups of $M$ and $\Prob_p (M)$
coincide.
\end{thmp}

\begin{proof}
Let us do induction on the dimension of the space $M.$ Take $\mu \in \Delta_n,$  such that $\mu = \sum_{i=1}^n a_i\delta_{x_i}$ where
$a_i > 0, \sum_{i=1}^n a_i=1.$
Fix $x_1$ and notice that $\Cut(x_1)$ is either a point or a totally geodesic embedded CROSS of dimension one less than the dimension of $X.$ By the induction hypothesis we have that any probability measure supported on $\Cut(x_1)$ is fixed by $\Phi.$

Since $\mu$ satisfies  the hypothesis of Proposition \ref{prop.parallelfixed} for $r_i = d(x_1,x_i), i \in \{1,\cdots, n \}$ we obtain then:
$$\Phi(\mu) = a_1\delta_{x_1}+\sum_{i=2}^{n}a_i\mu_i, \text{ where } \supp(\mu_i)\subset \partial B_{x_1}(r_i). $$
We repeat this argument for the remaining $x_i$ and obtain that $\mu=\Phi(\mu).$ Since the clousure of totally atomic measures is dense in
$\Prob_p(M)$ we conclude that $\Phi$ must be the identity.
\end{proof}


\end{document}